\newtheorem{axiom}[theorem]{Axiom}
\newtheorem{conjecture}[theorem]{Conjecture}
\newtheorem{example}[theorem]{Example}
\newtheorem{exercise}[theorem]{Exercise}
\newtheorem{remark}[theorem]{Remark}
\let\pdfoutput=\undefined\fi
\chardef\@x10\chardef\@xv60
\def\tcitime{
\def\@time{%
  \@minute\time\@hour\@minute\divide\@hour\@xv
  \ifnum\@hour<\@x 0\fi\the\@hour:%
  \multiply\@hour\@xv\advance\@minute-\@hour
  \ifnum\@minute<\@x 0\fi\the\@minute
  }}%
\def\x@hyperref#1#2#3{%
   \catcode`\~ = 12
   \catcode`\$ = 12
   \catcode`\_ = 12
   \catcode`\# = 12
   \catcode`\& = 12
   \catcode`\% = 12
   \y@hyperref{#1}{#2}{#3}%
}
\def\y@hyperref#1#2#3#4{%
   #2\ref{#4}#3
   \catcode`\~ = 13
   \catcode`\$ = 3
   \catcode`\_ = 8
   \catcode`\# = 6
   \catcode`\& = 4
   \catcode`\% = 14
}
\def\QCTOpt[#1]#2{%
  \def\QCTOptB{#1}
  \def\QCTOptA{#2}
}
\def\QCTNOpt#1{%
  \def\QCTOptA{#1}
  \let\QCTOptB\empty
}
\def\Qct{%
  \@ifnextchar[{%
    \QCTOpt}{\QCTNOpt}
}
\def\QCBOpt[#1]#2{%
  \def\QCBOptB{#1}%
  \def\QCBOptA{#2}%
}
\def\QCBNOpt#1{%
  \def\QCBOptA{#1}%
  \let\QCBOptB\empty
}
\def\Qcb{%
  \@ifnextchar[{%
    \QCBOpt}{\QCBNOpt}%
}
\def\PrepCapArgs{%
  \ifx\QCBOptA\empty
    \ifx\QCTOptA\empty
      {}%
    \else
      \ifx\QCTOptB\empty
        {\QCTOptA}%
      \else
        [\QCTOptB]{\QCTOptA}%
      \fi
    \fi
  \else
    \ifx\QCBOptA\empty
      {}%
    \else
      \ifx\QCBOptB\empty
        {\QCBOptA}%
      \else
        [\QCBOptB]{\QCBOptA}%
      \fi
    \fi
  \fi
}
\def\GRAPHICSPS#1{%
 \ifcase\GRAPHICSTYPE
   \special{ps: #1}%
 \or
   \special{language "PS", include "#1"}%
 \fi
}%
\def\graffile#1#2#3#4{%
    \bgroup
	   \@inlabelfalse
       \leavevmode
       \@ifundefined{bbl@deactivate}{\def~{\string~}}{\activesoff}%
        \raise -#4 \BOXTHEFRAME{%
           \hbox to #2{\raise #3\hbox to #2{\null #1\hfil}}}%
    \egroup
}%
\def\draftbox#1#2#3#4{%
 \leavevmode\raise -#4 \hbox{%
  \frame{\rlap{\protect\tiny #1}\hbox to #2%
   {\vrule height#3 width\z@ depth\z@\hfil}%
  }%
 }%
}%
\let\nographics=\@msidraft
\newif\ifwasdraft
\def\GRAPHIC#1#2#3#4#5{%
   \ifnum\@msidraft=\@ne\draftbox{#2}{#3}{#4}{#5}%
   \else\graffile{#1}{#3}{#4}{#5}%
   \fi
}
\def\addtoLaTeXparams#1{%
    \edef\LaTeXparams{\LaTeXparams #1}}%
\newif\ifBoxFrame \BoxFramefalse
\newif\ifOverFrame \OverFramefalse
\newif\ifUnderFrame \UnderFramefalse
\def\BOXTHEFRAME#1{%
   \hbox{%
      \ifBoxFrame
         \frame{#1}%
      \else
         {#1}%
      \fi
   }%
}
\def\doFRAMEparams#1{\BoxFramefalse\OverFramefalse\UnderFramefalse\readFRAMEparams#1\end}%
\def\readFRAMEparams#1{%
 \ifx#1\end%
  \let\next=\relax
  \else
  \ifx#1i\dispkind=\z@\fi
  \ifx#1d\dispkind=\@ne\fi
  \ifx#1f\dispkind=\tw@\fi
  \ifx#1t\addtoLaTeXparams{t}\fi
  \ifx#1b\addtoLaTeXparams{b}\fi
  \ifx#1p\addtoLaTeXparams{p}\fi
  \ifx#1h\addtoLaTeXparams{h}\fi
  \ifx#1X\BoxFrametrue\fi
  \ifx#1O\OverFrametrue\fi
  \ifx#1U\UnderFrametrue\fi
  \ifx#1w
    \ifnum\@msidraft=1\wasdrafttrue\else\wasdraftfalse\fi
    \@msidraft=\@ne
  \fi
  \let\next=\readFRAMEparams
  \fi
 \next
 }%
\def\IFRAME#1#2#3#4#5#6{%
      \bgroup
      \let\QCTOptA\empty
      \let\QCTOptB\empty
      \let\QCBOptA\empty
      \let\QCBOptB\empty
      #6%
      \parindent=0pt
      \leftskip=0pt
      \rightskip=0pt
      \setbox0=\hbox{\QCBOptA}%
      \@tempdima=#1\relax
      \ifOverFrame
          \typeout{This is not implemented yet}%
          \show\HELP
      \else
         \ifdim\wd0>\@tempdima
            \advance\@tempdima by \@tempdima
            \ifdim\wd0 >\@tempdima
               \setbox1 =\vbox{%
                  \unskip\hbox to \@tempdima{\hfill\GRAPHIC{#5}{#4}{#1}{#2}{#3}\hfill}%
                  \unskip\hbox to \@tempdima{\parbox[b]{\@tempdima}{\QCBOptA}}%
               }%
               \wd1=\@tempdima
            \else
               \textwidth=\wd0
               \setbox1 =\vbox{%
                 \noindent\hbox to \wd0{\hfill\GRAPHIC{#5}{#4}{#1}{#2}{#3}\hfill}\\%
                 \noindent\hbox{\QCBOptA}%
               }%
               \wd1=\wd0
            \fi
         \else
            \ifdim\wd0>0pt
              \hsize=\@tempdima
              \setbox1=\vbox{%
                \unskip\GRAPHIC{#5}{#4}{#1}{#2}{0pt}%
                \break
                \unskip\hbox to \@tempdima{\hfill \QCBOptA\hfill}%
              }%
              \wd1=\@tempdima
           \else
              \hsize=\@tempdima
              \setbox1=\vbox{%
                \unskip\GRAPHIC{#5}{#4}{#1}{#2}{0pt}%
              }%
              \wd1=\@tempdima
           \fi
         \fi
         \@tempdimb=\ht1
         \advance\@tempdimb by -#2
         \advance\@tempdimb by #3
         \leavevmode
         \raise -\@tempdimb \hbox{\box1}%
      \fi
      \egroup%
}%
\def\DFRAME#1#2#3#4#5{%
  \vspace\topsep
  \hfil\break
  \bgroup
     \leftskip\@flushglue
	 \rightskip\@flushglue
	 \parindent\z@
	 \parfillskip\z@skip
     \let\QCTOptA\empty
     \let\QCTOptB\empty
     \let\QCBOptA\empty
     \let\QCBOptB\empty
	 \vbox\bgroup
        \ifOverFrame 
           #5\QCTOptA\par
        \fi
        \GRAPHIC{#4}{#3}{#1}{#2}{\z@}%
        \ifUnderFrame 
           \break#5\QCBOptA
        \fi
	 \egroup
  \egroup
  \vspace\topsep
  \break
}%
\def\FFRAME#1#2#3#4#5#6#7{%
  \@ifundefined{floatstyle}
    {
     \begin{figure}[#1]%
    }
    {
	 \ifx#1h
      \begin{figure}[H]%
	 \else
      \begin{figure}[#1]%
	 \fi
	}
  \let\QCTOptA\empty
  \let\QCTOptB\empty
  \let\QCBOptA\empty
  \let\QCBOptB\empty
  \ifOverFrame
    #4
    \ifx\QCTOptA\empty
    \else
      \ifx\QCTOptB\empty
        \caption{\QCTOptA}%
      \else
        \caption[\QCTOptB]{\QCTOptA}%
      \fi
    \fi
    \ifUnderFrame\else
      \label{#5}%
    \fi
  \else
    \UnderFrametrue%
  \fi
  \begin{center}\GRAPHIC{#7}{#6}{#2}{#3}{\z@}\end{center}%
  \ifUnderFrame
    #4
    \ifx\QCBOptA\empty
      \caption{}%
    \else
      \ifx\QCBOptB\empty
        \caption{\QCBOptA}%
      \else
        \caption[\QCBOptB]{\QCBOptA}%
      \fi
    \fi
    \label{#5}%
  \fi
  \end{figure}%
 }%
\def\makeactives{
  \catcode`\"=\active
  \catcode`\;=\active
  \catcode`\:=\active
  \catcode`\'=\active
  \catcode`\~=\active
}
   \gdef\activesoff{%
      \def"{\string"}%
      \def;{\string;}%
      \def:{\string:}%
      \def'{\string'}%
      \def~{\string~}%
    }
\def\FRAME#1#2#3#4#5#6#7#8{%
 \bgroup
 \ifnum\@msidraft=\@ne
   \wasdrafttrue
 \else
   \wasdraftfalse%
 \fi
 \def\LaTeXparams{}%
 \dispkind=\z@
 \def\LaTeXparams{}%
 \doFRAMEparams{#1}%
 \ifnum\dispkind=\z@\IFRAME{#2}{#3}{#4}{#7}{#8}{#5}\else
  \ifnum\dispkind=\@ne\DFRAME{#2}{#3}{#7}{#8}{#5}\else
   \ifnum\dispkind=\tw@
    \edef\@tempa{\noexpand\FFRAME{\LaTeXparams}}%
    \@tempa{#2}{#3}{#5}{#6}{#7}{#8}%
    \fi
   \fi
  \fi
  \ifwasdraft\@msidraft=1\else\@msidraft=0\fi{}%
  \egroup
 }%
\def\TEXUX#1{"texux"}
\def\NEG#1{\leavevmode\hbox{\rlap{\thinspace/}{$#1$}}}%
\def\func#1{\mathop{\rm #1}\nolimits}%
\long\def\QQQ#1#2{%
     \long\expandafter\def\csname#1\endcsname{#2}}%
\long\def\QQA#1#2{}%
\def\QTR#1#2{{\csname#1\endcsname {#2}}}%
\def\EXPAND#1[#2]#3{}%
\def\NOEXPAND#1[#2]#3{}%
\def\LaTeXparent#1{}%
\def\ChildStyles#1{}%
\def\ChildDefaults#1{}%
\def\QTagDef#1#2#3{}%
  \providecommand{\UNICODE}[2][]{\protect\rule{.1in}{.1in}}
  \providecommand{\U}[1]{\protect\rule{.1in}{.1in}}
\def\QQfnmark#1{\footnotemark}
 \def\abstract{%
  \if@twocolumn
   \section*{Abstract (Not appropriate in this style!)}%
   \else \small 
   \begin{center}{\bf Abstract\vspace{-.5em}\vspace{\z@}}\end{center}%
   \quotation 
   \fi
  }%
   \def\registered{\relax\ifmmode{}\r@gistered
                    \else$\m@th\r@gistered$\fi}%
 \def\r@gistered{^{\ooalign
  {\hfil\raise.07ex\hbox{$\scriptstyle\rm\text{R}$}\hfil\crcr
  \mathhexbox20D}}}}{}%
\newdimen\theight
\def\newfmtname{LaTeX2e}
  \DeclareOldFontCommand{\rm}{\normalfont\rmfamily}{\mathrm}
  \DeclareOldFontCommand{\sf}{\normalfont\sffamily}{\mathsf}
  \DeclareOldFontCommand{\tt}{\normalfont\ttfamily}{\mathtt}
  \DeclareOldFontCommand{\bf}{\normalfont\bfseries}{\mathbf}
  \DeclareOldFontCommand{\it}{\normalfont\itshape}{\mathit}
  \DeclareOldFontCommand{\sl}{\normalfont\slshape}{\@nomath\sl}
  \DeclareOldFontCommand{\sc}{\normalfont\scshape}{\@nomath\sc}
\def\alpha{{\Greekmath 010B}}%
\def\beta{{\Greekmath 010C}}%
\def\gamma{{\Greekmath 010D}}%
\def\delta{{\Greekmath 010E}}%
\def\epsilon{{\Greekmath 010F}}%
\def\zeta{{\Greekmath 0110}}%
\def\eta{{\Greekmath 0111}}%
\def\theta{{\Greekmath 0112}}%
\def\iota{{\Greekmath 0113}}%
\def\kappa{{\Greekmath 0114}}%
\def\lambda{{\Greekmath 0115}}%
\def\mu{{\Greekmath 0116}}%
\def\nu{{\Greekmath 0117}}%
\def\xi{{\Greekmath 0118}}%
\def\pi{{\Greekmath 0119}}%
\def\rho{{\Greekmath 011A}}%
\def\sigma{{\Greekmath 011B}}%
\def\tau{{\Greekmath 011C}}%
\def\upsilon{{\Greekmath 011D}}%
\def\phi{{\Greekmath 011E}}%
\def\chi{{\Greekmath 011F}}%
\def\psi{{\Greekmath 0120}}%
\def\omega{{\Greekmath 0121}}%
\def\varepsilon{{\Greekmath 0122}}%
\def\vartheta{{\Greekmath 0123}}%
\def\varpi{{\Greekmath 0124}}%
\def\varrho{{\Greekmath 0125}}%
\def\varsigma{{\Greekmath 0126}}%
\def\varphi{{\Greekmath 0127}}%
\def\nabla{{\Greekmath 0272}}
\def\FindBoldGroup{%
   {\setbox0=\hbox{$\mathbf{x\global\edef\theboldgroup{\the\mathgroup}}$}}%
}
\def\Greekmath#1#2#3#4{%
    \if@compatibility
        \ifnum\mathgroup=\symbold
           \mathchoice{\mbox{\boldmath$\displaystyle\mathchar"#1#2#3#4$}}%
                      {\mbox{\boldmath$\textstyle\mathchar"#1#2#3#4$}}%
                      {\mbox{\boldmath$\scriptstyle\mathchar"#1#2#3#4$}}%
                      {\mbox{\boldmath$\scriptscriptstyle\mathchar"#1#2#3#4$}}%
        \else
           \mathchar"#1#2#3#4%
        \fi 
    \else 
        \FindBoldGroup
        \ifnum\mathgroup=\theboldgroup 
           \mathchoice{\mbox{\boldmath$\displaystyle\mathchar"#1#2#3#4$}}%
                      {\mbox{\boldmath$\textstyle\mathchar"#1#2#3#4$}}%
                      {\mbox{\boldmath$\scriptstyle\mathchar"#1#2#3#4$}}%
                      {\mbox{\boldmath$\scriptscriptstyle\mathchar"#1#2#3#4$}}%
        \else
           \mathchar"#1#2#3#4%
        \fi     	    
	  \fi}
\newif\ifGreekBold  \GreekBoldfalse
\let\SAVEPBF=\pbf
\def\pbf{\GreekBoldtrue\SAVEPBF}%
  \newcounter{equationnumber}  
  \def\mathletters{%
     \addtocounter{equation}{1}
     \edef\@currentlabel{\theequation}%
     \setcounter{equationnumber}{\c@equation}
     \setcounter{equation}{0}%
     \edef\theequation{\@currentlabel\noexpand\alph{equation}}%
  }
    \def\BibTeX{{\rm B\kern-.05em{\sc i\kern-.025em b}\kern-.08em
                 T\kern-.1667em\lower.7ex\hbox{E}\kern-.125emX}}}{}%
\def\AmS{{\protect\usefont{OMS}{cmsy}{m}{n}%
                A\kern-.1667em\lower.5ex\hbox{M}\kern-.125emS}}}{}%
\def\@@eqncr{\let\@tempa\relax
    \ifcase\@eqcnt \def\@tempa{& & &}\or \def\@tempa{& &}%
      \else \def\@tempa{&}\fi
     \@tempa
     \if@eqnsw
        \iftag@
           \@taggnum
        \else
           \@eqnnum\stepcounter{equation}%
        \fi
     \fi
     \global\tag@false
     \global\@eqnswtrue
     \global\@eqcnt\z@\cr}
\def\TCItag{\@ifnextchar*{\@TCItagstar}{\@TCItag}}
\def\@TCItag#1{%
    \global\tag@true
    \global\def\@taggnum{(#1)}%
    \global\def\@currentlabel{#1}}
\def\@TCItagstar*#1{%
    \global\tag@true
    \global\def\@taggnum{#1}%
    \global\def\@currentlabel{#1}}
\def\tint{\msi@int\textstyle\int}%
\def\tiint{\msi@int\textstyle\iint}%
\def\tiiint{\msi@int\textstyle\iiint}%
\def\tiiiint{\msi@int\textstyle\iiiint}%
\def\tidotsint{\msi@int\textstyle\idotsint}%
\def\toint{\msi@int\textstyle\oint}%
\def\tsum{\mathop{\textstyle \sum }}%
\def\tprod{\mathop{\textstyle \prod }}%
\newtoks\temptoksa
\newtoks\temptoksb
\newtoks\temptoksc
\def\msi@int#1#2{%
 \def\@temp{{#1#2\the\temptoksc_{\the\temptoksa}^{\the\temptoksb}}}%
 \futurelet\@nextcs
 \@int
}
\def\@int{%
   \ifx\@nextcs\limits
      \typeout{Found limits}%
      \temptoksc={\limits}%
	  \let\@next\@intgobble%
   \else\ifx\@nextcs\nolimits
      \typeout{Found nolimits}%
      \temptoksc={\nolimits}%
	  \let\@next\@intgobble%
   \else
      \typeout{Did not find limits or no limits}%
      \temptoksc={}%
      \let\@next\msi@limits%
   \fi\fi
   \@next   
}%
\def\@intgobble#1{%
   \typeout{arg is #1}%
   \msi@limits
}
\def\msi@limits{%
   \temptoksa={}%
   \temptoksb={}%
   \@ifnextchar_{\@limitsa}{\@limitsb}%
}
\def\@limitsa_#1{%
   \temptoksa={#1}%
   \@ifnextchar^{\@limitsc}{\@temp}%
}
\def\@limitsb{%
   \@ifnextchar^{\@limitsc}{\@temp}%
}
\def\@limitsc^#1{%
   \temptoksb={#1}%
   \@ifnextchar_{\@limitsd}{\@temp}%
}
\def\@limitsd_#1{%
   \temptoksa={#1}%
   \@temp
}
\def\dint{\msi@int\displaystyle\int}%
\def\diint{\msi@int\displaystyle\iint}%
\def\diiint{\msi@int\displaystyle\iiint}%
\def\diiiint{\msi@int\displaystyle\iiiint}%
\def\didotsint{\msi@int\displaystyle\idotsint}%
\def\doint{\msi@int\displaystyle\oint}%
\def\dsum{\mathop{\displaystyle \sum }}%
\def\ExitTCILatex{\makeatother }
\if@compatibility\message{amsmath already loaded}\fi\aftergroup\ExitTCILatex}
\if@compatibility\message{amstex already loaded}\fi\aftergroup\ExitTCILatex}
\if@compatibility\message{amsgen already loaded}\fi\aftergroup\ExitTCILatex}
\let\DOTSI\relax
\def\RIfM@{\relax\ifmmode}%
\def\FN@{\futurelet\next}%
\def\iint{\DOTSI\intno@\tw@\FN@\ints@}%
\def\iiint{\DOTSI\intno@\thr@@\FN@\ints@}%
\def\iiiint{\DOTSI\intno@4 \FN@\ints@}%
\def\idotsint{\DOTSI\intno@\z@\FN@\ints@}%
\def\ints@{\findlimits@\ints@@}%
\newif\iflimtoken@
\newif\iflimits@
\def\findlimits@{\limtoken@true\ifx\next\limits\limits@true
 \else\ifx\next\nolimits\limits@false\else
 \limtoken@false\ifx\ilimits@\nolimits\limits@false\else
 \ifinner\limits@false\else\limits@true\fi\fi\fi\fi}%
\def\multint@{\int\ifnum\intno@=\z@\intdots@                          
 \else\intkern@\fi                                                    
 \ifnum\intno@>\tw@\int\intkern@\fi                                   
 \ifnum\intno@>\thr@@\int\intkern@\fi                                 
 \int}
\def\multintlimits@{\intop\ifnum\intno@=\z@\intdots@\else\intkern@\fi
 \ifnum\intno@>\tw@\intop\intkern@\fi
 \ifnum\intno@>\thr@@\intop\intkern@\fi\intop}%
\def\intic@{%
    \mathchoice{\hskip.5em}{\hskip.4em}{\hskip.4em}{\hskip.4em}}%
\def\negintic@{\mathchoice
 {\hskip-.5em}{\hskip-.4em}{\hskip-.4em}{\hskip-.4em}}%
\def\ints@@{\iflimtoken@                                              
 \def\ints@@@{\iflimits@\negintic@
   \mathop{\intic@\multintlimits@}\limits                             
  \else\multint@\nolimits\fi                                          
  \eat@}
 \else                                                                
 \def\ints@@@{\iflimits@\negintic@
  \mathop{\intic@\multintlimits@}\limits\else
  \multint@\nolimits\fi}\fi\ints@@@}%
\def\intkern@{\mathchoice{\!\!\!}{\!\!}{\!\!}{\!\!}}%
\def\plaincdots@{\mathinner{\cdotp\cdotp\cdotp}}%
\def\intdots@{\mathchoice{\plaincdots@}%
 {{\cdotp}\mkern1.5mu{\cdotp}\mkern1.5mu{\cdotp}}%
 {{\cdotp}\mkern1mu{\cdotp}\mkern1mu{\cdotp}}%
 {{\cdotp}\mkern1mu{\cdotp}\mkern1mu{\cdotp}}}%
\def\RIfM@{\relax\protect\ifmmode}
\def\text{\RIfM@\expandafter\text@\else\expandafter\mbox\fi}
\let\nfss@text\text
\def\text@#1{\mathchoice
   {\textdef@\displaystyle\f@size{#1}}%
   {\textdef@\textstyle\tf@size{\firstchoice@false #1}}%
   {\textdef@\textstyle\sf@size{\firstchoice@false #1}}%
   {\textdef@\textstyle \ssf@size{\firstchoice@false #1}}%
   \glb@settings}
\def\textdef@#1#2#3{\hbox{{%
                    \everymath{#1}%
                    \let\f@size#2\selectfont
                    #3}}}
\newif\iffirstchoice@
\def\Let@{\relax\iffalse{\fi\let\\=\cr\iffalse}\fi}%
\def\vspace@{\def\vspace##1{\crcr\noalign{\vskip##1\relax}}}%
\def\multilimits@{\bgroup\vspace@\Let@
 \baselineskip\fontdimen10 \scriptfont\tw@
 \advance\baselineskip\fontdimen12 \scriptfont\tw@
 \lineskip\thr@@\fontdimen8 \scriptfont\thr@@
 \lineskiplimit\lineskip
 \vbox\bgroup\ialign\bgroup\hfil$\m@th\scriptstyle{##}$\hfil\crcr}%
\def\Sb{_\multilimits@}%
\def\endSb{\crcr\egroup\egroup\egroup}%
\def\Sp{^\multilimits@}%
\newdimen\ex@
\def\rightarrowfill@#1{$#1\m@th\mathord-\mkern-6mu\cleaders
 \hbox{$#1\mkern-2mu\mathord-\mkern-2mu$}\hfill
 \mkern-6mu\mathord\rightarrow$}%
\def\leftarrowfill@#1{$#1\m@th\mathord\leftarrow\mkern-6mu\cleaders
 \hbox{$#1\mkern-2mu\mathord-\mkern-2mu$}\hfill\mkern-6mu\mathord-$}%
\def\leftrightarrowfill@#1{$#1\m@th\mathord\leftarrow
\mkern-6mu\cleaders
 \hbox{$#1\mkern-2mu\mathord-\mkern-2mu$}\hfill
 \mkern-6mu\mathord\rightarrow$}%
\def\overrightarrow{\mathpalette\overrightarrow@}%
\def\overrightarrow@#1#2{\vbox{\ialign{##\crcr\rightarrowfill@#1\crcr
 \noalign{\kern-\ex@\nointerlineskip}$\m@th\hfil#1#2\hfil$\crcr}}}%
\def\overleftarrow{\mathpalette\overleftarrow@}%
\def\overleftarrow@#1#2{\vbox{\ialign{##\crcr\leftarrowfill@#1\crcr
 \noalign{\kern-\ex@\nointerlineskip}$\m@th\hfil#1#2\hfil$\crcr}}}%
\def\overleftrightarrow{\mathpalette\overleftrightarrow@}%
\def\overleftrightarrow@#1#2{\vbox{\ialign{##\crcr
   \leftrightarrowfill@#1\crcr
 \noalign{\kern-\ex@\nointerlineskip}$\m@th\hfil#1#2\hfil$\crcr}}}%
\def\underrightarrow{\mathpalette\underrightarrow@}%
\def\underrightarrow@#1#2{\vtop{\ialign{##\crcr$\m@th\hfil#1#2\hfil
  $\crcr\noalign{\nointerlineskip}\rightarrowfill@#1\crcr}}}%
\def\underleftarrow{\mathpalette\underleftarrow@}%
\def\underleftarrow@#1#2{\vtop{\ialign{##\crcr$\m@th\hfil#1#2\hfil
  $\crcr\noalign{\nointerlineskip}\leftarrowfill@#1\crcr}}}%
\def\underleftrightarrow{\mathpalette\underleftrightarrow@}%
\def\underleftrightarrow@#1#2{\vtop{\ialign{##\crcr$\m@th
  \hfil#1#2\hfil$\crcr
 \noalign{\nointerlineskip}\leftrightarrowfill@#1\crcr}}}%
\def\qopnamewl@#1{\mathop{\operator@font#1}\nlimits@}
\let\nlimits@\displaylimits
\def\setboxz@h{\setbox\z@\hbox}
\def\varlim@#1#2{\mathop{\vtop{\ialign{##\crcr
 \hfil$#1\m@th\operator@font lim$\hfil\crcr
 \noalign{\nointerlineskip}#2#1\crcr
 \noalign{\nointerlineskip\kern-\ex@}\crcr}}}}
 \def\rightarrowfill@#1{\m@th\setboxz@h{$#1-$}\ht\z@\z@
  $#1\copy\z@\mkern-6mu\cleaders
  \hbox{$#1\mkern-2mu\box\z@\mkern-2mu$}\hfill
  \mkern-6mu\mathord\rightarrow$}
\def\leftarrowfill@#1{\m@th\setboxz@h{$#1-$}\ht\z@\z@
  $#1\mathord\leftarrow\mkern-6mu\cleaders
  \hbox{$#1\mkern-2mu\copy\z@\mkern-2mu$}\hfill
  \mkern-6mu\box\z@$}
\def\projlim{\qopnamewl@{proj\,lim}}
\def\injlim{\qopnamewl@{inj\,lim}}
\def\varinjlim{\mathpalette\varlim@\rightarrowfill@}
\def\varprojlim{\mathpalette\varlim@\leftarrowfill@}
\def\varliminf{\mathpalette\varliminf@{}}
\def\varliminf@#1{\mathop{\underline{\vrule\@depth.2\ex@\@width\z@
   \hbox{$#1\m@th\operator@font lim$}}}}
\def\varlimsup{\mathpalette\varlimsup@{}}
\def\varlimsup@#1{\mathop{\overline
  {\hbox{$#1\m@th\operator@font lim$}}}}
\def\align{\@verbatim \frenchspacing\@vobeyspaces \@alignverbatim
You are using the "align" environment in a style in which it is not defined.}
\let\csname endalign*\endcsname =\endtrivlist
\def\alignat{\@verbatim \frenchspacing\@vobeyspaces \@alignatverbatim
You are using the "alignat" environment in a style in which it is not defined.}
\let\csname endalignat*\endcsname =\endtrivlist
\def\xalignat{\@verbatim \frenchspacing\@vobeyspaces \@xalignatverbatim
You are using the "xalignat" environment in a style in which it is not defined.}
\let\csname endxalignat*\endcsname =\endtrivlist
\def\gather{\@verbatim \frenchspacing\@vobeyspaces \@gatherverbatim
You are using the "gather" environment in a style in which it is not defined.}
\let\csname endgather*\endcsname =\endtrivlist
\def\multiline{\@verbatim \frenchspacing\@vobeyspaces \@multilineverbatim
You are using the "multiline" environment in a style in which it is not defined.}
\let\csname endmultiline*\endcsname =\endtrivlist
\def\arrax{\@verbatim \frenchspacing\@vobeyspaces \@arraxverbatim
You are using a type of "array" construct that is only allowed in AmS-LaTeX.}
\def\tabulax{\@verbatim \frenchspacing\@vobeyspaces \@tabulaxverbatim
You are using a type of "tabular" construct that is only allowed in AmS-LaTeX.}
\let\csname endarrax*\endcsname =\endtrivlist
\let\csname endtabulax*\endcsname =\endtrivlist
 \def\endequation{%
     \ifmmode\ifinner 
      \iftag@
        \addtocounter{equation}{-1} 
        $\hfil
           \displaywidth\linewidth\@taggnum\egroup \endtrivlist
        \global\tag@false
        \global\@ignoretrue   
      \else
        $\hfil
           \displaywidth\linewidth\@eqnnum\egroup \endtrivlist
        \global\tag@false
        \global\@ignoretrue 
      \fi
     \else   
      \iftag@
        \addtocounter{equation}{-1} 
        \eqno \hbox{\@taggnum}
        \global\tag@false%
        $$\global\@ignoretrue
      \else
        \eqno \hbox{\@eqnnum}
        $$\global\@ignoretrue
      \fi
     \fi\fi
 } 
 \newif\iftag@ \tag@false
 \def\TCItag{\@ifnextchar*{\@TCItagstar}{\@TCItag}}
 \def\@TCItag#1{%
     \global\tag@true
     \global\def\@taggnum{(#1)}%
     \global\def\@currentlabel{#1}}
 \def\@TCItagstar*#1{%
     \global\tag@true
     \global\def\@taggnum{#1}%
     \global\def\@currentlabel{#1}}
     \def\tag{\@ifnextchar*{\@tagstar}{\@tag}}
     \def\@tag#1{%
         \global\tag@true
         \global\def\@taggnum{(#1)}}
     \def\@tagstar*#1{%
         \global\tag@true
         \global\def\@taggnum{#1}}
\begin{document}

\title{On Explicit Recursive Formulas in the Spectral Perturbation Analysis of a Jordan Block\thanks{%
This work was supported by the Air Force Office of Scientific Research under
grant \#FA9550-08-1-0103.}}
\author{AARON WELTERS\thanks{%
Dept.\ of Mathematics, Univ.\ of California at Irvine, Irvine CA 92697 (%
\texttt{awelters@math.uci.edu}).}}
\maketitle

\begin{abstract}
Let $A\left( \varepsilon \right) $ be an analytic square matrix and $\lambda
_{0}$ an eigenvalue of $A\left( 0\right) $ of algebraic multiplicity $m\geq 1
$.  Then under the condition, $\frac{\partial }{\partial
\varepsilon }\det \left( \lambda I-A\left( \varepsilon \right) \right)
|_{\left( \varepsilon ,\lambda \right) =\left( 0,\lambda _{0}\right) }\neq 0$%
, we prove that the Jordan normal form of $A\left( 0\right) $ corresponding
to the eigenvalue $\lambda _{0}$ consists of a single $m\times m$ Jordan
block, the perturbed eigenvalues near $\lambda _{0}$ and their corresponding eigenvectors can be represented by a single convergent Puiseux series containing only powers of $\varepsilon ^{1/m}$, and there are explicit recursive formulas to compute all the Puiseux series coefficients from just the derivatives of $%
A\left( \varepsilon \right) $ at the origin.  Using these recursive
formulas we calculate the series coefficients up to the second order and
list them for quick reference.  This paper gives, under a generic
condition, explicit recursive formulas to compute the perturbed eigenvalues
and eigenvectors for non-selfadjoint analytic perturbations of matrices with
non-derogatory eigenvalues.
\end{abstract}

\keyphrases{Matrix Perturbation Theory, Degenerate Eigenvalue, Jordan Block, Perturbation of
Eigenvalues and Eigenvectors, Puiseux Series, Recursive Formula}
\AMclass{15A15, 15A18, 15A21, 41A58, 47A55, 47A56, 65F15, 65F40}

\pagestyle{myheadings} \thispagestyle{plain} 
\markboth{AARON
WELTERS}{RECURSIVE FORMULAS AND JORDAN BLOCK PERTURBATIONS}

\section{Introduction}

Consider an analytic square matrix $A\left( \varepsilon \right) $ and its
unperturbed matrix $A\left( 0\right) $ with a degenerate eigenvalue $\lambda
_{0}$.  A fundamental problem in the analytic perturbation theory of
non-selfadjoint matrices is the determination of the perturbed eigenvalues
near $\lambda _{0}$ along with their corresponding eigenvectors of the
matrix $A\left( \varepsilon \right) $ near $\varepsilon =0$.  More
specifically, let $A\left( \varepsilon \right) $ be a matrix-valued function
having a range in $%
\mathbb{C}
^{n\times n}$, the set of $n\times n$ matrices with complex entries, such
that its matrix elements are analytic functions of $\varepsilon $ in a
neighborhood of the origin. Let $\lambda _{0}$ be an eigenvalue of the
matrix $A\left( 0\right) $ with algebraic multiplicity $m\geq 1$.  Then in this situation, it is well known 
\cite[\S 6.1.7]{Baumgartel1985}, \cite[\S II.1.8]{Kato1995} that for sufficiently
small $\varepsilon $ all the perturbed eigenvalues near $\lambda _{0}$,
called the $\lambda _{0}$-group, and their corresponding eigenvectors may be
represented as a collection of convergent Puiseux series, i.e., convergent Taylor series in a fractional power of $\varepsilon $.  What is not well known,
however, is how we compute these Puiseux series when $A\left( \varepsilon
\right) $ is a non-selfadjoint analytic perturbation and $\lambda _{0}$ is a
defective eigenvalue of $A\left( 0\right)$.  There are sources on the
subject like \cite[\S 7.4]{Baumgartel1985}, \cite{Lidskii1966}, \cite{MoroBurkeOverton1997}%
, \cite[\S 32]{VainbergTrenogin1974}, and \cite{VishikLjusternik1960} but it was
found that there lacked explicit formulas, recursive or otherwise, to
compute the series coefficients beyond the first order terms.  Thus the
fundamental problem that this paper addresses is to find
explicit recursive formulas to determine the Puiseux series coefficients for
the $\lambda _{0}$-group and their eigenvectors.

This problem is of applied and theoretic importance, for example, in studying the spectral properties of dispersive media such as photonic crystals.  In particular, this is especially true in the study of slow light \cite{FigotinVitebskiy2006}--\cite{YargaSertelVolakis2008}, where the characteristic equation, $\det \left( \lambda I-A\left(\varepsilon \right) \right)=0$, represents implicitly the dispersion relation for Bloch waves in the periodic crystal.  In this setting $\varepsilon$ represents a small change in frequency, $A(\varepsilon)$ is the Transfer matrix of a unit cell, and its eigenpairs, $(\lambda(\varepsilon),x(\varepsilon))$, correspond to the Bloch waves.  From a practical and theoretical point of view, condition \eqref{generic condition} on the dispersion relation or its equivalent formulation in Theorem \ref{Generic Condition Theorem}.ii of this paper regarding the group velocity for this setting, arises naturally in the study of slow light where the Jordan normal form of the unperturbed Transfer matrix, $A(0)$, and the perturbation expansions of the eigenpairs of the Transfer matrix play a central role in the analysis of slow light waves.

\subsection*{Main Results}%

In this paper under the \textit{generic condition},%
\begin{equation}
\frac{\partial }{\partial \varepsilon }\det \left( \lambda I-A\left(
\varepsilon \right) \right) \big |_{\left( \varepsilon ,\lambda \right)
=\left( 0,\lambda _{0}\right) }\neq 0\text{,}  \label{generic condition}
\end{equation}%
we show that $\lambda_0$ is a non-derogatory eigenvalue of $A(0)$ and the fundamental problem mentioned above can be solved.  In particular, we
prove Theorem \ref{Generic Condition Theorem} and Theorem \ref{Main Results
Theorem} which together state that when condition $\eqref{generic condition}$ is true then the
Jordan normal form of $A\left( 0\right) $ corresponding to the eigenvalue $%
\lambda _{0}$ consists of a single $m\times m$ Jordan block, the $\lambda _{0}$%
-group and their corresponding eigenvectors can each be represented by a single
convergent Puiseux series whose branches are given by
\begin{eqnarray*}
\lambda _{h}\left( \varepsilon \right) &=&\lambda
_{0}+\sum\limits_{k=1}^{\infty }\alpha _{k}\left( \zeta ^{h}\varepsilon ^{%
\frac{1}{m}}\right) ^{k} \\
x_{h}\left( \varepsilon \right) &=&\beta_{0}+\sum\limits_{k=1}^{\infty
}\beta_{k}\left( \zeta ^{h}\varepsilon ^{\frac{1}{m}}\right) ^{k}
\end{eqnarray*}%
for $h=0, \ldots ,m-1$ and any fixed branch of $\varepsilon^{\frac{1}{m}}$, where $\zeta =e^{\frac{2\pi}{m}i}\text{, }\{\alpha _{k}\}_{k=1}^\infty\subseteq\mathbb{C}\text{, }\{\beta _{k}\}_{k=0}^\infty\subseteq\mathbb{C}^{n\times 1}\text{, }\alpha
_{1}\neq 0$, and $\beta_{0}$ is an eigenvector of $A\left( 0\right) $
corresponding to the eigenvalue $\lambda _{0}$.  More importantly though, Theorem \ref{MainTheorem} gives explicit recursive formulas that allows us to determine
the Puiseux series coefficients, $\{\alpha _{k}\}_{k=1}^\infty$ and $\{\beta _{k}\}_{k=0}^\infty$, from just the derivatives of $A\left(
\varepsilon \right) $ at $\varepsilon =0$.  Using these recursive formulas,
we compute the leading Puiseux series coefficients up to the second order
and list them in Corollary \ref{Second order coefficients corollary}.

The key to all of our results is the study of the characteristic equation
for the analytic matrix $A\left( \varepsilon \right) $ under the generic
condition (\ref{generic condition}).  By an application of the implicit
function theorem, we are able to derive the functional relation between the
eigenvalues and the perturbation parameter.  This leads to the implication
that the Jordan normal form of the unperturbed matrix $A\left( 0\right) $
corresponding to the eigenvalue $\lambda _{0}$ is a single $m\times m$
Jordan block.  From this, we are able to use the method of undetermined
coefficients along with a careful combinatorial analysis to get explicit recursive formulas for determining the Puiseux series coefficients.

We want to take a moment here to show how the results of this paper can be
used to determine the Puiseux series coefficients up to the second order for
the case in which the non-derogatory eigenvalue $\lambda_0$ has algebraic multiplicity $m\geq 2$.  We start by putting $A\left( 0\right) $ into the Jordan normal form \cite[\S 6.5: The
Jordan Theorem]{LancasterTismenetsky1985} 
\begin{equation}
U^{-1}A\left( 0\right) U=\left[ 
\begin{array}{c|c}
J_{m}\left( \lambda _{0}\right) &  \\ \hline
& W_{0}%
\end{array}%
\right] \text{,}
\end{equation}%
where (see notations at end of \S 1) $J_{m}\left( \lambda _{0}\right) $ is
an $m\times m$ Jordan block corresponding to the eigenvalue $\lambda _{0}$\
and $W_{0}$ is the Jordan normal form for the rest of the spectrum.  Next, define the vectors $u_{1}$,\ldots , $u_{m}$, as the first $m$ columns
of the matrix $U$, 
\begin{equation}
u_{i}:=Ue_{i}\text{, }1\leq i\leq m
\end{equation}%
(These vectors have the properties that $u_{1}$ is an eigenvector of $%
A\left( 0\right) $ corresponding to the eigenvalue $\lambda _{0}$, they form
a Jordan chain with generator $u_{m}$, and are a basis for the algebraic
eigenspace of $A\left( 0\right) $ corresponding to the eigenvalue $\lambda
_{0}$).  We then partition the matrix $U^{-1}A^\prime(0)
U $ conformally to the blocks $J_{m}\left( \lambda _{0}\right) $ and $W_{0}$
of the matrix $U^{-1}A\left( 0\right) U$ as such 
\begin{equation}
U^{-1}A^\prime(0)U=\left[ 
\begin{array}{ccccc|ccc}
\ast & \ast & \ast & \cdots & \ast & \ast & \cdots & \ast \\ 
\vdots & \vdots & \vdots & \ddots & \vdots & \vdots & \ddots & \vdots \\ 
\ast & \ast & \ast & \cdots & \ast & \ast & \cdots & \ast \\ 
a_{m-1,1} & \ast & \ast & \cdots & \ast & \ast & \cdots & \ast \\ 
a_{m,1} & a_{m,2} & \ast & \cdots & \ast & \ast & \cdots & \ast \\ \hline
\ast & \ast & \ast & \cdots & \ast & \ast & \cdots & \ast \\ 
\vdots & \vdots & \vdots & \ddots & \vdots & \vdots & \ddots & \vdots \\* 
\ast & \ast & \ast & \cdots & \ast & \ast & \cdots & \ast%
\end{array}%
\right] \text{.}
\end{equation}%
Now, by Theorem \ref{Generic Condition Theorem} and Theorem \ref%
{Main Results Theorem}, it follows that 
\begin{equation}
a_{m,1}=-\frac{\frac{\partial }{\partial \varepsilon }\det \left( \lambda
I-A\left( \varepsilon \right) \right) |_{\left( \varepsilon ,\lambda \right)
=\left( 0,\lambda _{0}\right) }}{\left( \frac{\frac{\partial ^{m}}{\partial
\lambda ^{m}}\det \left( \lambda I-A\left( \varepsilon \right) \right)
|_{\left( \varepsilon ,\lambda \right) =\left( 0,\lambda _{0}\right) }}{m!}%
\right) }\text{.}
\end{equation}%
And hence the generic condition is true if and only if $a_{m,1}\not=0$.  This gives us an alternative method to determine whether the generic
condition (\ref{generic condition}) is true or not.

Lets now assume that $a_{m,1}\not=0$ and hence that the generic condition is
true.  Define $f\left( \varepsilon ,\lambda \right) :=\det
\left( \lambda I-A\left( \varepsilon \right) \right) $.  Then by Theorem %
\ref{Main Results Theorem} and Corollary \ref{Second order coefficients
corollary} there is exactly one convergent Puiseux series for the perturbed eigenvalues near $\lambda _{0}$ and one for their corresponding eigenvectors whose branches are given by%
\begin{eqnarray}
\lambda _{h}\left( \varepsilon \right) &=&\lambda _{0}+\alpha _{1}\left(
\zeta ^{h}\varepsilon ^{\frac{1}{m}}\right) +\alpha _{2}\left( \zeta
^{h}\varepsilon ^{\frac{1}{m}}\right) ^{2}+\sum\limits_{k=3}^{\infty
}\alpha _{k}\left( \zeta ^{h}\varepsilon ^{\frac{1}{m}}\right) ^{k} \\
x_{h}\left( \varepsilon \right) &=&x_{0}+\beta_{1}\left( \zeta ^{h}\varepsilon ^{%
\frac{1}{m}}\right) +\beta_{2}\left( \zeta ^{h}\varepsilon ^{\frac{1}{m}}\right)
^{2}+\sum\limits_{k=3}^{\infty }\beta_{k}\left( \zeta ^{h}\varepsilon ^{\frac{1}{%
m}}\right) ^{k}
\end{eqnarray}%
for $h=0, \ldots ,m-1$ and any fixed branch of $\varepsilon^{\frac{1}{m}}$, where $\zeta =e^{\frac{2\pi }{m}i}$.  Furthermore,
the series coefficients up to second order may be given by 
\begin{eqnarray}
\alpha _{1} &=&a_{m,1}^{1/m}=\left( -\frac{\frac{\partial f}{\partial
\varepsilon }\left( 0,\lambda _{0}\right) }{\frac{1}{m!}\frac{\partial ^{m}f%
}{\partial \lambda ^{m}}\left( 0,\lambda _{0}\right) }\right) ^{1/m}\neq 0%
\text{,} \\
\alpha _{2} &=&\frac{a_{m-1,1}+a_{m,2}}{m\alpha _{1}^{m-2}}=\frac{-\left(
\alpha _{1}^{m+1}\frac{1}{\left( m+1\right) !}\frac{\partial ^{m+1}f}{%
\partial \lambda ^{m+1}}\left( 0,\lambda _{0}\right) +\alpha _{1}\frac{%
\partial ^{2}f}{\partial \lambda \partial \varepsilon }\left( 0,\lambda
_{0}\right) \right) }{m\alpha _{1}^{m-1}\left( \frac{1}{m!}\frac{\partial
^{m}f}{\partial \lambda ^{m}}\left( 0,\lambda _{0}\right) \right) }\text{,}
\\
\beta_{0} &=&u_{1}\text{, }\beta_{1}=\alpha _{1}u_{2}\text{, }\beta_{2} =\left\{ 
\begin{array}{c}
-\Lambda A^\prime(0) u_{1}+\alpha _{2}u_{2}\text{, if }m=2
\\ 
\alpha _{2}u_{2}+\alpha _{1}^{2}u_{3}\text{, if }m>2%
\end{array}%
\right.
\end{eqnarray}%
for any choice of the $m$th root of $a_{m,1}$ and where $\Lambda $ is given
in (\ref{PartialInverseOfTheJordanNormalFormOfANot}).

The explicit recursive formulas for computing higher order terms, $\alpha
_{k}, \beta_{k}$, are given by (%
\ref{MainResultsTheoremRecursiveFormulas2}) and (\ref%
{MainResultsTheoremRecursiveFormulas3}) in Theorem \ref{Main Results Theorem}.  The steps which should be used to determine these higher order terms are discussed in Remark \ref{RemarkSteps} and an example showing how to calculating $\alpha_3,\beta_3$ using these steps, when $m\geq 3$, is provided. 

\subsection*{Example}%

The following example may help to give a better idea of these results. \
Consider 
\begin{equation}
A\left( \varepsilon \right) :=\left[ 
\begin{array}{rrr}
-\frac{1}{2} & 1 & \frac{1}{2} \\ 
\frac{1}{2} & 0 & -\frac{1}{2} \\ 
-1 & 1 & 1%
\end{array}%
\right] +\varepsilon \left[ 
\begin{array}{rrr}
2 & 0 & -1 \\ 
2 & 0 & -1 \\ 
1 & 0 & 0%
\end{array}%
\right] \text{.}  \label{Example1}
\end{equation}

Here $\lambda _{0}=0$ is a non-derogatory eigenvalue of $A\left( 0\right) $ of algebraic
multiplicity $m=2$.  We put $A\left( 0\right) $ into the Jordan normal form%
\begin{equation*}
U^{-1}A\left( 0\right) U=\left[ 
\begin{array}{cc|c}
0 & 1 & 0 \\ 
0 & 0 & 0 \\ \hline
0 & 0 & 1/2%
\end{array}%
\right] \text{, }U=%
\begin{bmatrix}
1 & 1 & 1 \\ 
0 & 1 & 1 \\ 
1 & 1 & 0%
\end{bmatrix}%
\text{, }U^{-1}=\left[ 
\begin{array}{rrr}
1 & -1 & 0 \\ 
-1 & 1 & 1 \\ 
1 & 0 & -1%
\end{array}%
\right] \text{,}
\end{equation*}%
so that $W_{0}=1/2$.  We next define the vectors $u_{1}$, $u_{2}$, as the
first two columns of the matrix $U$, 
\begin{equation*}
u_{1}:=%
\begin{bmatrix}
1 \\ 
0 \\ 
1%
\end{bmatrix}%
\text{, }u_{2}:=%
\begin{bmatrix}
1 \\ 
1 \\ 
1%
\end{bmatrix}%
\text{.}
\end{equation*}%
Next we partition the matrix $U^{-1}A^\prime(0) U$
conformally to the blocks $J_{m}\left( \lambda _{0}\right) $ and $W_{0}$ of
the matrix $U^{-1}A\left( 0\right) U$ as such%
\begin{equation*}
U^{-1}A^\prime(0)U=\left[ 
\begin{array}{rrr}
1 & -1 & 0 \\ 
-1 & 1 & 1 \\ 
1 & 0 & -1%
\end{array}%
\right] \left[ 
\begin{array}{rrr}
2 & 0 & -1 \\ 
2 & 0 & -1 \\ 
1 & 0 & 0%
\end{array}%
\right] 
\begin{bmatrix}
1 & 1 & 1 \\ 
0 & 1 & 1 \\ 
1 & 1 & 0%
\end{bmatrix}%
=\left[ 
\begin{array}{cc|c}
0 & \ast & \ast \\ 
1 & 1 & \ast \\ \hline
\ast & \ast & \ast%
\end{array}%
\right] \text{.}
\end{equation*}%
Here $a_{2,1}=1$, $a_{1,1}=0$, and $a_{2,2}=1$.  Then%
\begin{equation*}
1=a_{2,1}=-\frac{\frac{\partial }{\partial \varepsilon }\det \left( \lambda
I-A\left( \varepsilon \right) \right) |_{\left( \varepsilon ,\lambda \right)
=\left( 0,\lambda _{0}\right) }}{\left( \frac{\frac{\partial ^{2}}{\partial
\lambda ^{2}}\det \left( \lambda I-A\left( \varepsilon \right) \right)
|_{\left( \varepsilon ,\lambda \right) =\left( 0,\lambda _{0}\right) }}{2!}%
\right) }\text{,}
\end{equation*}%
implying that the generic condition (\ref{generic condition}) is true.  Define $f\left( \varepsilon ,\lambda \right) :=\det \left( \lambda I-A\left(
\varepsilon \right) \right) =\lambda ^{3}-2\lambda ^{2}\varepsilon -\frac{1}{%
2}\lambda ^{2}+\lambda \varepsilon ^{2}-\frac{1}{2}\lambda \varepsilon
+\varepsilon ^{2}+\frac{1}{2}\varepsilon $.  Then there is exactly one convergent Puiseux series for the perturbed eigenvalues near $\lambda _{0}=0$ and one for their corresponding eigenvectors whose branches are given by%
\begin{eqnarray*}
\lambda _{h}\left( \varepsilon \right) &=&\lambda _{0}+\alpha _{1}\left(
\left( -1\right) ^{h}\varepsilon ^{\frac{1}{2}}\right) +\alpha _{2}\left(
\left( -1\right) ^{h}\varepsilon ^{\frac{1}{2}}\right)
^{2}+\sum\limits_{k=3}^{\infty }\alpha _{k}\left( \left( -1\right)
^{h}\varepsilon ^{\frac{1}{2}}\right) ^{k} \\
x_{h}\left( \varepsilon \right) &=&\beta_{0}+\beta_{1}\left( \left( -1\right)
^{h}\varepsilon ^{\frac{1}{2}}\right) +\beta_{2}\left( \left( -1\right)
^{h}\varepsilon ^{\frac{1}{2}}\right) ^{2}+\sum\limits_{k=3}^{\infty
}\beta_{k}\left( \left( -1\right) ^{h}\varepsilon ^{\frac{1}{2}}\right) ^{k}
\end{eqnarray*}%
for $h=0, 1$ and any fixed branch of $\varepsilon^{\frac{1}{2}}$. \ Furthermore, the series coefficients up to second order
may be given by 
\begin{eqnarray*}
\alpha _{1} &=&1=\sqrt{1}=\sqrt{a_{2,1}}=\sqrt{\left( -\frac{\frac{\partial
f}{\partial \varepsilon }\left( 0,\lambda _{0}\right) }{\frac{1}{2!}\frac{%
\partial ^{2}f}{\partial \lambda ^{2}}\left( 0,\lambda _{0}\right) }\right) }%
\neq 0\text{,} \\
\alpha _{2} &=&\frac{1}{2}=\frac{a_{1,1}+a_{2,2}}{2}=\frac{-\left( \alpha
_{1}^{3}\frac{1}{3!}\frac{\partial ^{3}f}{\partial \lambda ^{3}}\left(
0,\lambda _{0}\right) +\alpha _{1}\frac{\partial ^{2}f}{\partial \lambda
\partial \varepsilon }\left( 0,\lambda _{0}\right) \right) }{\alpha
_{1}\left( \frac{1}{2!}\frac{\partial ^{2}f}{\partial \lambda ^{2}}\left(
0,\lambda _{0}\right) \right) }\text{,} \\
\beta_{0} &=&%
\begin{bmatrix}
1 \\ 
0 \\ 
1%
\end{bmatrix}%
\text{, }\beta_{1}=%
\begin{bmatrix}
1 \\ 
1 \\ 
1%
\end{bmatrix}%
\text{, }
\beta_{2} =-\Lambda A^\prime(0) u_{1}+\alpha _{2}u_{2}
\end{eqnarray*}%
by choosing the positive square root of $a_{2,1}=1$ and where $\Lambda $ is
given in (\ref{PartialInverseOfTheJordanNormalFormOfANot}).  Here%
\begin{eqnarray*}
\Lambda &=&U\left[ 
\begin{array}{c|c}
J_{m}\left( 0\right) ^{\ast } &  \\ \hline
& \left( W_{0}-\lambda _{0}I_{n-m}\right) ^{-1}%
\end{array}%
\right] U^{-1} \\
&=&%
\begin{bmatrix}
1 & 1 & 1 \\ 
0 & 1 & 1 \\ 
1 & 1 & 0%
\end{bmatrix}%
\left[ 
\begin{array}{cc|c}
0 & 0 & 0 \\ 
1 & 0 & 0 \\ \hline
0 & 0 & \left( 1/2\right) ^{-1}%
\end{array}%
\right] \left[ 
\begin{array}{rrr}
1 & -1 & 0 \\ 
-1 & 1 & 1 \\ 
1 & 0 & -1%
\end{array}%
\right] =\left[ 
\begin{array}{rrr}
3 & -1 & -2 \\ 
3 & -1 & -2 \\ 
1 & -1 & 0%
\end{array}%
\right]\\
\beta_{2} &=&-\Lambda A^\prime(0) u_{1}+\alpha _{2}u_{2} \\
&=&-\left[ 
\begin{array}{rrr}
3 & -1 & -2 \\ 
3 & -1 & -2 \\ 
1 & -1 & 0%
\end{array}%
\right] \allowbreak \left[ 
\begin{array}{rrr}
2 & 0 & -1 \\ 
2 & 0 & -1 \\ 
1 & 0 & 0%
\end{array}%
\right] 
\begin{bmatrix}
1 \\ 
0 \\ 
1%
\end{bmatrix}%
+\frac{1}{2}%
\begin{bmatrix}
1 \\ 
1 \\ 
1%
\end{bmatrix}%
=\frac{1}{2}%
\begin{bmatrix}
1 \\ 
1 \\ 
1%
\end{bmatrix}%
\text{.}
\end{eqnarray*}%

Now compare this to the actual perturbed eigenvalues of our example (\ref%
{Example1}) near $\lambda _{0}=0$ and their corresponding eigenvectors%
\begin{eqnarray*}
\lambda _{h}\left( \varepsilon \right) &=&\frac{1}{2}\varepsilon +\left(
-1\right) ^{h}\frac{1}{2}\varepsilon ^{\frac{1}{2}}\left( \varepsilon
+4\right) ^{\frac{1}{2}} \\
&=&\left( \left( -1\right) ^{h}\varepsilon ^{\frac{1}{2}}\right) +\frac{1}{2}%
\left( \left( -1\right) ^{h}\varepsilon ^{\frac{1}{2}}\right)
^{2}+\sum\limits_{k=3}^{\infty }\alpha _{k}\left( \left( -1\right)
^{h}\varepsilon ^{\frac{1}{2}}\right) ^{k} \\
x_{h}\left( \varepsilon \right) &=&%
\begin{bmatrix}
1 \\ 
0 \\ 
1%
\end{bmatrix}%
+%
\begin{bmatrix}
1 \\ 
1 \\ 
1%
\end{bmatrix}%
\lambda _{h}\left( \varepsilon \right) \\
&=&%
\begin{bmatrix}
1 \\ 
0 \\ 
1%
\end{bmatrix}%
+%
\begin{bmatrix}
1 \\ 
1 \\ 
1%
\end{bmatrix}%
\left( \left( -1\right) ^{h}\varepsilon ^{\frac{1}{2}}\right) +\frac{1}{2}%
\begin{bmatrix}
1 \\ 
1 \\ 
1%
\end{bmatrix}%
\left( \left( -1\right) ^{h}\varepsilon ^{\frac{1}{2}}\right)
^{2}+\sum\limits_{k=3}^{\infty }\beta_{k}\left( \left( -1\right) ^{h}\varepsilon
^{\frac{1}{2}}\right) ^{k}
\end{eqnarray*}%
for $h=0,1$ and any fixed branch of $\varepsilon^{\frac{1}{2}}$.  We see that indeed our formulas for the Puiseux series
coefficients are correct up to the second order.

\subsection*{Comparison to Known Results}%

There is a fairly large amount of literature on eigenpair perturbation expansions for analytic perturbations of non-selfadjoint matrices with degenerate eigenvalues 
(e.g.\ \cite{Baumgartel1985}--\cite{VishikLjusternik1960}, \cite{AndrewChuLancaster1993}--\cite{Sun1990}).  However, most of the literature (e.g.\ \cite{Lidskii1966}, \cite{MoroBurkeOverton1997}, \cite{AndrewChuLancaster1993}, \cite{AndrewTan2000}, \cite{HrynivLancaster1999}, 
\cite{Lancaster1964}--\cite{MoroDopico2001}, \cite{Sun1985}, \cite{Sun1990}) contains results only on the first order expansions of the Puiseux series or considers higher order terms only in the case of simple or semisimple eigenvalues.  For those works that do address higher order terms for defective eigenvalues (e.g.\ \cite{Baumgartel1985}, \cite{Kato1995}, \cite{VainbergTrenogin1974}, \cite{VishikLjusternik1960}, \cite{Chu1990}, \cite{JeannerodPflugel1999}, \cite{SeyranianMailybaev2003}), it was found that there did not exist explicit recursive formulas for all the Puiseux coefficients when the matrix perturbations were non-linear.  One of the purposes and achievements of this paper are the explicit recursive formulas \eqref{MainResultsTheoremRecursiveFormulas1}--\eqref{MainResultsTheoremRecursiveFormulas3} in Theorem \ref{MainTheorem} which give all the higher order terms in the important case of degenerate eigenvalues which are non-derogatory, that is, the case in which a degenerate eigenvalue of the unperturbed matrix has a single Jordan block for its corresponding Jordan structure.
Our theorem generalizes and extends the results of \cite[pp.\ 315--317, (4.96) \& (4.97)]{Baumgartel1985}, \cite[pp.\ 415--418]{VainbergTrenogin1974}, and \cite[pp.\ 17--20]{VishikLjusternik1960} to non-linear analytic matrix perturbations and makes explicit the recursive formulas for calculating the perturbed eigenpair Puiseux expansions.  Furthermore, in Proposition \ref{ExplicitRecursiveFormulasPolynomials} we give an explicit recursive formula for calculating the polynomials $\left\{r_l\right\}_{l\in\mathbb{N}}$.  These polynomials must be calculated in order to determine the higher order terms in the eigenpair Puiseux series expansions (see \eqref{MainResultsTheoremRecursiveFormulas3} in Theorem \ref{MainTheorem} and Remark \ref{RemarkSteps}).  These polynomials appear in \cite[p.\ 315, (4.95)]{Baumgartel1985}, \cite[p.\ 414, (32.24)]{VainbergTrenogin1974}, and \cite[p.\ 19, (34)]{VishikLjusternik1960} under different notation (compare with Proposition \ref{eq:B.1.ii}.\ref{PropB1ii}) but no method is given to calculate them.  As such, Proposition \ref{ExplicitRecursiveFormulasPolynomials} is an important contribution in the explicit recursive calculation of the higher order terms in the eigenpair Puiseux series expansions.

Another purpose of this paper is to give, in the case of degenerate non-derogatory eigenvalues, an easily accessible and quickly referenced list of first and second order terms for the Puiseux series expansions of the perturbed eigenpairs.  When the generic condition \eqref{generic condition} is satisfied, Corollary \ref{2ndOrderCoeffCorollary} gives this list.  Now for first order terms there are quite a few papers on formulas for determining them, see for example \cite{MoroDopico2001} which gives a good survey of first order perturbation theory.  But for second order terms, it was difficult to find any results in the literature similar to and as explicit as Corollary \ref{2ndOrderCoeffCorollary} for the case of degenerate non-derogatory eigenvalues with arbitrary algebraic multiplicity and non-linear analytic perturbations.  Results comparable to ours can be found in \cite[p.\ 316]{Baumgartel1985}, \cite[pp.\ 415--418]{VainbergTrenogin1974}, \cite[pp.\ 17-20]{VishikLjusternik1960}, and \cite[pp.\ 37--38, 50--54, 125--128]{SeyranianMailybaev2003}, although it should be noted that in \cite[p.\ 417]{VainbergTrenogin1974} the formula for the second order term of the perturbed eigenvalues contains a misprint.

\subsection*{Overview}%

Section 2 deals with the generic condition (\ref{generic condition}).  We
give conditions that are equivalent to the generic condition in Theorem \ref%
{Generic Condition Theorem}.  In \S 3 we give the main results of this
paper in Theorem \ref{Main Results Theorem}, on the determination of the
Puiseux series with the explicit recursive formulas for calculating the series
coefficients.  As a corollary we give the exact leading order terms, up to the second order, for the Puiseux series coefficients.  Section 4 contains the proofs of the results in \S 2 and \S 3.

\subsection*{Notation}%

Let $%
\mathbb{C}
^{n\times n}$ be the set of all $n\times n$ matrices with complex entries
and $%
\mathbb{C}
^{n\times 1}$ the set of all $n\times 1$ column vectors with complex
entries.  For $a\in 
\mathbb{C}
$, $A\in 
\mathbb{C}
^{n\times n}$, and $x=\left[ a_{i,1}\right] _{i=1}^{n}\in 
\mathbb{C}
^{n\times 1}$ we denote by $a^{\ast }$, $A^{\ast }$, and $x^{\ast }$, the
complex conjugate of $a$, the conjugate transpose of $A$, and the $1\times n$
row vector $x^{\ast }:=\left[ 
\begin{array}{rrr}
a_{1,1}^{\ast } & \cdots & a_{n,1}^{\ast }%
\end{array}%
\right] $.  For $x$, $y\in 
\mathbb{C}
^{n\times 1}$ we let $\left( x,y\right) :=x^{\ast }y$ be the standard inner
product.  The matrix $I\in 
\mathbb{C}
^{n\times n}$ is the identity matrix and its $j$th column is $e_{j}\in 
\mathbb{C}
^{n\times 1}$.  The matrix $I_{n-m}$ is the $\left( n-m\right) \times
\left( n-m\right) $ identity matrix.  Define an $m\times m$ Jordan block
with eigenvalue $\lambda $ to be 
\begin{equation*}
J_{m}\left( \lambda \right) :=%
\begin{bmatrix}
\lambda & 1 &  &  &  \\ 
& . & . &  &  \\ 
&  & . & . &  \\ 
&  &  & . & 1 \\ 
&  &  &  & \lambda%
\end{bmatrix}%
\text{.}
\end{equation*}%
When the matrix $A\left( \varepsilon \right) \in 
\mathbb{C}
^{n\times n}$ is analytic at $\varepsilon =0$ we define $A^\prime(0):=\frac{dA}{d\varepsilon}(0)$ and $A_k:=\frac{1}{k!}\frac{d^k A}{d\varepsilon^k}(0)$.  Let $\zeta :=e^{i\frac{2\pi }{m}}$.

\section{The Generic Condition}

The following theorem, which is proved in \S 4, gives conditions which are
equivalent to the generic one (\ref{generic condition}).

\begin{theorem}
\label{Generic Condition Theorem}Let $A\left( \varepsilon \right) $ be a
matrix-valued function having a range in $%
\mathbb{C}
^{n\times n}$ such that its matrix elements are analytic functions of $%
\varepsilon $ in a neighborhood of the origin.  Let $\lambda _{0}$ be an
eigenvalue of the unperturbed matrix $A\left( 0\right) $ and denote by $m$
its algebraic multiplicity.  Then the following statements are equivalent:

\begin{romannum}
\item The characteristic polynomial $\det \left( \lambda I-A\left(
\varepsilon \right) \right) $ has a simple zero with respect to $\varepsilon 
$ at $\lambda =\lambda _{0}$ and $\varepsilon =0$, i.e., 
\begin{equation*}
\frac{\partial }{\partial \varepsilon }\det \left( \lambda I-A\left(
\varepsilon \right) \right) \big |_{\left( \varepsilon ,\lambda \right)
=\left( 0,\lambda _{0}\right) }\neq 0\text{.}
\end{equation*}

\item The characteristic equation, $\det (\lambda I-A\left( \varepsilon
\right) )=0$, has a unique solution, $\varepsilon \left( \lambda \right) $,
in a neighborhood of $\lambda =\lambda _{0}$ with $\varepsilon \left(
\lambda _{0}\right) =0$.  This solution is an analytic function with a zero
of order $m$ at $\lambda =\lambda _{0}$, i.e., 
\begin{equation*}
\frac{d^{0}\varepsilon \left( \lambda \right) }{d\lambda ^{0}}\Big |%
_{\lambda =\lambda _{0}}=\cdots =\frac{d^{m-1}\varepsilon \left( \lambda
\right) }{d\lambda ^{m-1}}\Big |_{\lambda =\lambda _{0}}=0\text{, }\frac{%
d^{m}\varepsilon \left( \lambda \right) }{d\lambda ^{m}}\Big |_{\lambda
=\lambda _{0}}\neq 0\text{.}
\end{equation*}

\item There exists a convergent Puiseux series whose branches are given by%
\begin{equation*}
\lambda _{h}\left( \varepsilon \right) =\lambda _{0}+\alpha _{1}\zeta
^{h}\varepsilon ^{\frac{1}{m}}+\sum\limits_{k=2}^{\infty }\alpha
_{k}\left( \zeta ^{h}\varepsilon ^{\frac{1}{m}}\right)^{k}\text{, }%
\end{equation*}%
for $h=0, \ldots ,m-1$ and any fixed branch of $\varepsilon^{\frac{1}{m}}$, where $\zeta =e^{\frac{2\pi }{m}i}$, such that the values of the branches give all the solutions of the
characteristic equation, for sufficiently small $\varepsilon $ and $\lambda $
sufficiently near $\lambda _{0}$.  Furthermore, the first order term is
nonzero, i.e., 
\begin{equation*}
\alpha _{1}\neq 0\text{.}
\end{equation*}

\item The Jordan normal form of $A\left( 0\right) $ corresponding to the
eigenvalue $\lambda _{0}$ consists of a single $m\times m$ Jordan block and
there exists an eigenvector $u_{0}$ of $A\left( 0\right) $ corresponding to
the eigenvalue $\lambda _{0}$ and an eigenvector $v_{0}$ of $A\left(
0\right) ^{\ast }$ corresponding to the eigenvalue $\lambda _{0}^{\ast }$
such that 
\begin{equation*}
\left( v_{0},A^{\prime}(0) u_{0}\right) \neq 0\text{.}
\end{equation*}
\end{romannum}
\end{theorem}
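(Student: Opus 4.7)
My plan is to establish the four-way equivalence through three manageable pairwise equivalences: \textbf{(i) $\Leftrightarrow$ (ii)} via the analytic implicit function theorem applied to $f(\varepsilon,\lambda):=\det(\lambda I-A(\varepsilon))$, \textbf{(ii) $\Leftrightarrow$ (iii)} by a Puiseux inversion, and \textbf{(i) $\Leftrightarrow$ (iv)} via Jacobi's formula for the derivative of a determinant combined with an explicit analysis of $\mathrm{adj}(\lambda_0 I-A(0))$.

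For \textbf{(i) $\Rightarrow$ (ii)}, condition (i) says $f(0,\lambda_0)=0$ and $\partial_\varepsilon f(0,\lambda_0)\neq 0$, so the analytic implicit function theorem yields a unique analytic solution $\varepsilon(\lambda)$ with $\varepsilon(\lambda_0)=0$. Writing $f(0,\lambda)=(\lambda-\lambda_0)^m g(\lambda)$ with $g(\lambda_0)\neq 0$ (from the algebraic multiplicity $m$) and expanding $f(\varepsilon(\lambda),\lambda)\equiv 0$ in powers of $\lambda-\lambda_0$, matching the lowest-order coefficient forces $\varepsilon(\lambda)=-g(\lambda_0)(\lambda-\lambda_0)^m/\partial_\varepsilon f(0,\lambda_0)+O((\lambda-\lambda_0)^{m+1})$, a zero of order exactly $m$. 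The reverse implication reads off $\partial_\varepsilon f(0,\lambda_0)=-g(\lambda_0)/h(\lambda_0)\neq 0$ from the same expansion after factoring $\varepsilon(\lambda)=(\lambda-\lambda_0)^m h(\lambda)$. For \textbf{(ii) $\Leftrightarrow$ (iii)}, with that factorization in hand, the analytic map $\mu(\lambda):=(\lambda-\lambda_0)h(\lambda)^{1/m}$ satisfies $\mu^m=\varepsilon$ and $\mu'(\lambda_0)=h(\lambda_0)^{1/m}\neq 0$; the analytic inverse function theorem produces $\lambda-\lambda_0=\sum_{k\geq 1}\alpha_k\mu^k$ with $\alpha_1=h(\lambda_0)^{-1/m}\neq 0$, and the substitution $\mu=\zeta^h\varepsilon^{1/m}$ generates the $m$ branches. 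The reverse direction inverts the single branch $\lambda_0(\varepsilon)$ as a power series in $\mu=\varepsilon^{1/m}$ (possible since $\alpha_1\neq 0$) to recover an analytic $\varepsilon(\lambda)$ of order $m$.

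For \textbf{(i) $\Leftrightarrow$ (iv)}, Jacobi's identity gives $\partial_\varepsilon f(0,\lambda_0)=-\mathrm{tr}(\mathrm{adj}(\lambda_0 I-A(0))\,A'(0))$. If $\lambda_0$ has more than one Jordan block, the geometric multiplicity is at least $2$, so $\mathrm{rank}(\lambda_0 I-A(0))\leq n-2$ and $\mathrm{adj}(\lambda_0 I-A(0))=0$, forcing the right side to vanish and contradicting (i); hence (i) requires a single $m\times m$ Jordan block. In this case, working in the Jordan basis $U^{-1}A(0)U=J_m(\lambda_0)\oplus W_0$, a short minor calculation for the nilpotent block $N$ gives $\mathrm{adj}(-N)=e_1e_m^T$, which via the block-diagonal adjugate identity yields $\mathrm{adj}(\lambda_0 I-A(0))=\det(\lambda_0 I-W_0)\,u_0 v_0^*$ with $u_0:=Ue_1$ (an eigenvector of $A(0)$ for $\lambda_0$) and $v_0:=(U^{-1})^*e_m$ (an eigenvector of $A(0)^*$ for $\lambda_0^*$). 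Substituting back gives $\partial_\varepsilon f(0,\lambda_0)=-\det(\lambda_0 I-W_0)\,(v_0,A'(0)u_0)$; since $\det(\lambda_0 I-W_0)\neq 0$ and a single Jordan block forces geometric multiplicity $1$ (so $(v_0,A'(0)u_0)$ is well defined up to a nonzero scalar across all eigenvector choices), (i) and (iv) are equivalent.

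The principal technical obstacle is the adjugate identification in the last step: proving by a short combinatorial argument that the only nonvanishing $(m-1)\times(m-1)$ minor of the nilpotent Jordan block $N$ is the $(m,1)$ cofactor (because its submatrix is the only one whose nonzero super-diagonal pattern can be completed to a permutation), and then transporting this identity cleanly through the similarity by $U$ to obtain the factorization $\gamma u_0 v_0^*$ with $\gamma=\det(\lambda_0 I-W_0)\neq 0$. Once this adjugate identity and the eigenvector choice are in place, the equivalence of the ``there exist'' and ``for all'' quantifiers on the eigenvectors in (iv) is a brief scalar-rescaling check, and the remaining pieces of the argument rely only on standard analytic and linear-algebraic tools.
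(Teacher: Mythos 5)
Your proposal is correct, and for the analytic part it tracks the paper closely: (i)$\Rightarrow$(ii) via the holomorphic implicit function theorem plus matching the coefficient of $(\lambda-\lambda_0)^m$ in $f(\varepsilon(\lambda),\lambda)\equiv 0$, and (ii)$\Rightarrow$(iii) via the factorization $\varepsilon(\lambda)=\mu(\lambda)^m$ with $\mu'(\lambda_0)\neq 0$ and inversion of $\mu$ are exactly the paper's arguments; your added reverse implications (ii)$\Rightarrow$(i) and (iii)$\Rightarrow$(ii) are sound, though for the latter you should say explicitly that \emph{uniqueness} of $\varepsilon(\lambda)$ comes from the ``all the solutions'' clause of (iii): any root $(\tilde\varepsilon,\lambda)$ satisfies $\lambda=y(\zeta^h\tilde\varepsilon^{1/m})$ for some branch, whence $\tilde\varepsilon=\bigl(y^{-1}(\lambda)\bigr)^m$. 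Where you genuinely depart from the paper is the link to (iv). The paper closes a cycle (i)$\Rightarrow$(ii)$\Rightarrow$(iii)$\Rightarrow$(iv)$\Rightarrow$(i): it proves (iii)$\Rightarrow$(iv) by building an analytic eigenvector $x(\lambda)$ of $A(g(\lambda))-\lambda I$ for the constant eigenvalue $0$ and reading a length-$m$ Jordan chain and the relation $(v,A'(0)x(\lambda_0))=\alpha_1^m\neq 0$ off the Taylor coefficients, and it proves (iv)$\Rightarrow$(i) via a singular value decomposition combined with the Ipsen--Rehman expansion of $\tfrac{d}{d\varepsilon}\det$. You instead prove (i)$\Leftrightarrow$(iv) in one stroke from Jacobi's formula $\partial_\varepsilon f(0,\lambda_0)=-\mathrm{tr}\bigl(\mathrm{adj}(\lambda_0I-A(0))\,A'(0)\bigr)$: the vanishing of the adjugate when the geometric multiplicity exceeds one correctly forces the single-block structure under (i), and your rank-one identity $\mathrm{adj}(\lambda_0I-A(0))=\det(\lambda_0I-W_0)\,u_0v_0^{*}$ (which checks out, since $\mathrm{adj}(-J_m(0))=e_1e_m^{T}$, the block-diagonal adjugate kills the $W_0$ block because $\det(-J_m(0))=0$, and $\mathrm{adj}(UMU^{-1})=U\,\mathrm{adj}(M)\,U^{-1}$) gives $\partial_\varepsilon f(0,\lambda_0)=-\det(\lambda_0I-W_0)\,(v_0,A'(0)u_0)$ with $\det(\lambda_0I-W_0)\neq 0$, and the scalar-rescaling remark disposes of the quantifier on the eigenvectors. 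Your route is more elementary and self-contained (no SVD, no external determinant-perturbation theorem), and the displayed identity, combined with $\tfrac{1}{m!}\partial_\lambda^m f(0,\lambda_0)=\det(\lambda_0I-W_0)$, even recovers the formula $\alpha_1^m=(v_m,A_1u_1)$ used in Theorem 3.1. What the paper's longer detour buys is the explicit Jordan chain $x(\lambda_0),x'(\lambda_0),\dots,x^{(m-1)}(\lambda_0)$, which sets up the eigenvector recursions proved later; your argument does not produce that structure, but it is not needed for the equivalence itself.
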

\section{Determination of the Puiseux Series and the Explicit Recursive Formulas for Calculating the Series}

This section contains the main results of this paper presented below in
Theorem \ref{Main Results Theorem}.  To begin we give some preliminaries
that are needed to set up the theorem.  Suppose that $A\left( \varepsilon
\right) $ is a matrix-valued function having a range in $%
\mathbb{C}
^{n\times n}$ with matrix elements that are analytic functions of $%
\varepsilon $ in a neighborhood of the origin and $\lambda _{0}$ is an
eigenvalue of the unperturbed matrix $A\left( 0\right) $ with algebraic
multiplicity $m$.  Assume that the generic condition%
\begin{equation*}
\frac{\partial }{\partial \varepsilon }\det \left( \lambda I-A\left(
\varepsilon \right) \right) \big |_{\left( \varepsilon ,\lambda \right)
=\left( 0,\lambda _{0}\right) }\neq 0\text{,}
\end{equation*}
is true.

Now, by these assumptions, we may appeal to Theorem \ref{Generic Condition
Theorem}.iv and conclude that the Jordan canonical form of $A(0)$ has
only one $m\times m$ Jordan block associated with $\lambda _{0}$.  Hence
there exists a invertible matrix $U$ $\in 
\mathbb{C}
^{n\times n}$ such that 
\begin{equation}
U^{-1}A\left( 0\right) U=\left[ 
\begin{array}{c|c}
J_{m}\left( \lambda _{0}\right) &  \\ \hline
& W_{0}%
\end{array}%
\right] \text{,}  \label{TheJordanNormalFormOfANot}
\end{equation}%
where $W_{0}$ is a $\left( n-m\right) \times \left( n-m\right) $ matrix such
that $\lambda _{0}$ is not one of its eigenvalues \cite[\S 6.5: The Jordan
Theorem]{LancasterTismenetsky1985}.

We define the vectors $u_{1},\ldots , u_{m}$, $v_{1},\ldots , v_{m}\in 
\mathbb{C}
^{n\times 1}$ as the first $m$ columns of the matrix $U$ and $\left(
U^{-1}\right) ^{\ast }$, respectively, i.e., 
\begin{eqnarray}
u_{i} &:&=Ue_{i}\text{, }1\leq i\leq m\text{,}  \label{BiorthogonalVectors}
\\
v_{i} &:&=\left( U^{-1}\right) ^{\ast }e_{i}\text{, }1\leq i\leq m\text{.} 
\label{BiorthogonalVectors_v}
\end{eqnarray}%
And define the matrix $\Lambda \in 
\mathbb{C}
^{n\times n}$ by%
\begin{equation}
\Lambda :=U\left[ 
\begin{array}{c|c}
J_{m}\left( 0\right) ^{\ast } &  \\ \hline
& \left( W_{0}-\lambda _{0}I_{n-m}\right) ^{-1}%
\end{array}%
\right] U^{-1}\text{,}  \label{PartialInverseOfTheJordanNormalFormOfANot}
\end{equation}%
where $\left( W_{0}-\lambda _{0}I_{n-m}\right) ^{-1}$ exists since $\lambda
_{0}$ is not an eigenvalue of $W_{0}$(for the important properties of the matrix $\Lambda$ see Appendix A).

Next, we introduce the polynomials $p_{j,i}=p_{j,i}\left( \alpha _{1}\text{%
, \ldots , }\alpha _{j-i+1}\right) $ in $\alpha _{1}$,\ldots , $\alpha
_{j-i+1}$, for $j\geq i\geq 0$, as the expressions%
\begin{eqnarray}
\left. 
\begin{array}{c}
p_{0,0}:=1\text{, }p_{j,0}:=0\text{, for }j>0\text{,} \\ 
p_{j,i}\left( \alpha _{1}\text{, \ldots , }\alpha _{j-i+1}\right) :=\dsum\limits_{\substack{ s_{1}+\cdots +s_{i}=j \\ 1\leq s_{\varrho }\leq j-i+1}}%
\displaystyle\prod\limits_{\varrho =1}^{i}\alpha _{s_{\varrho }}\text{, for }j\geq i>0
\end{array}%
\right\}   \label{pPolysValWRightIndZero}
\end{eqnarray}
and the polynomials $r_{l}=r_{l}(\alpha _{1}$, \ldots , $\alpha
_{l})$ in $\alpha _{1}$,\ldots , $\alpha _{l}$, for $l\geq 1$, as the expressions%
\begin{eqnarray}
r_{1}:=0\text{, }r_{l}(\alpha _{1}\text{, \ldots , }%
\alpha _{l}):=\dsum\limits_{\substack{ s_{1}+\cdots +s_{m}=m+l  \\ 1\leq
s_{\varrho }\leq l}}\prod\limits_{\varrho =1}^{m}\alpha _{s_{\varrho }}%
\text{, for }l>1 \label{rPolysVal}
\end{eqnarray}%
(see Appendix B for more details on these polynomials including recursive formulas for their calculation).

With these preliminaries we can now state the main results of this paper.  Proofs of these results are contained in the next section.

\begin{theorem}\label{MainTheorem}
\label{Main Results Theorem}Let $A\left( \varepsilon \right) $ be a
matrix-valued function having a range in $%
\mathbb{C}
^{n\times n}$ such that its matrix elements are analytic functions of $%
\varepsilon $ in a neighborhood of the origin.  Let $\lambda _{0}$ be an
eigenvalue of the unperturbed matrix $A\left( 0\right) $ and denote by $m$
its algebraic multiplicity.  Suppose that the generic condition%
\begin{equation}
\frac{\partial }{\partial \varepsilon }\det \left( \lambda I-A\left(
\varepsilon \right) \right) \big |_{\left( \varepsilon ,\lambda \right)
=\left( 0,\lambda _{0}\right) }\neq 0\text{,}
\label{MainResultsTheoremGenericCondition}
\end{equation}%
is true.  Then there is exactly one convergent Puiseux series for the $%
\lambda _{0}$-group and one for their corresponding eigenvectors whose branches are given by%
\begin{eqnarray}
\lambda _{h}\left( \varepsilon \right) &=&\lambda
_{0}+\sum\limits_{k=1}^{\infty }\alpha _{k}\left( \zeta ^{h}\varepsilon ^{%
\frac{1}{m}}\right) ^{k}  \label{MainResultsTheoremEigenvaluePuiseuxSeries}
\\
x_{h}\left( \varepsilon \right) &=&\beta_{0}+\sum\limits_{k=1}^{\infty
}\beta_{k}\left( \zeta ^{h}\varepsilon ^{\frac{1}{m}}\right) ^{k}
\label{MainResultsTheoremEigenvectorPuiseuxSeries}
\end{eqnarray}%
for $h=0, \ldots ,m-1$ and any fixed branch of $\varepsilon^{\frac{1}{m}}$, where $\zeta =e^{\frac{2\pi }{m}i}$ with %
\begin{equation*}
\alpha _{1}^{m}=\left( v_{m},A_{1}u_{1}\right) =-\frac{\frac{\partial }{%
\partial \varepsilon }\det \left( \lambda I-A\left( \varepsilon \right)
\right) |_{\left( \varepsilon ,\lambda \right) =\left( 0,\lambda _{0}\right)
}}{\left( \frac{\frac{\partial ^{m}}{\partial \lambda ^{m}}\det \left(
\lambda I-A\left( \varepsilon \right) \right) |_{\left( \varepsilon ,\lambda
\right) =\left( 0,\lambda _{0}\right) }}{m!}\right) }\neq 0
\end{equation*}%
$($Here $A_1$ denotes $\frac{dA}{d\varepsilon}(0)$ and the vectors $u_1\text{ and }v_m$ are defined in \eqref{BiorthogonalVectors} and \eqref{BiorthogonalVectors_v}$)$.
Furthermore, we can choose 
\begin{equation}
\alpha _{1}=\left( v_{m},A_{1}u_{1}\right) ^{1/m}\text{,}
\label{MainResultsTheoremFirstOrderTermChoice}
\end{equation}%
for any fixed $m$th root of $\left( v_{m},A_{1}u_{1}\right) $ and the
eigenvectors to satisfy the normalization conditions 
\begin{equation}
\left( v_{1},x_{h}\left( \varepsilon \right) \right) =1\text{,}~h=0,...,m-1%
\text{.}  \label{MainResultsTheoremNormalizationCondition}
\end{equation}%
Consequently, under these conditions $\alpha _{1}$, $\alpha _{2}$,\ldots\
and $\beta_{0}$, $\beta_{1}$, \ldots\ are uniquely determined and are given by the
recursive formulas%
\begin{eqnarray}
\alpha _{1} &=&\left( v_{m},A_{1}u_{1}\right) ^{1/m}=\left( -\frac{\frac{%
\partial }{\partial \varepsilon }\det \left( \lambda I-A\left( \varepsilon
\right) \right) |_{\left( \varepsilon ,\lambda \right) =\left( 0,\lambda
_{0}\right) }}{\left( \frac{\frac{\partial ^{m}}{\partial \lambda ^{m}}\det
\left( \lambda I-A\left( \varepsilon \right) \right) |_{\left( \varepsilon
,\lambda \right) =\left( 0,\lambda _{0}\right) }}{m!}\right) }\right) ^{1/m}
\label{MainResultsTheoremRecursiveFormulas1} \\
\alpha _{s} &=&\frac{-r_{s-1}+\sum\limits_{i=0}^{\min\{s,m\}-1}\sum\limits_{j=i}^{s-1}p_{j,i}%
\left( v_{m-i},\sum\limits_{k=1}^{\left\lfloor \frac{m+s-1-j}{m}%
\right\rfloor }A_{k}\beta_{m+s-1-j-km}\right) }{m\alpha _{1}^{m-1}}  \label{MainResultsTheoremRecursiveFormulas2} \\
\beta_{s} &=&\left\{ 
\begin{array}{c}
\sum\limits_{i=0}^{s}p_{s,i}u_{i+1}\text{, if }0\leq s\leq m-1 \\ 
\sum\limits_{i=0}^{m-1}p_{s,i}u_{i+1}-\sum\limits_{j=0}^{s-m}\sum%
\limits_{k=0}^{j}\sum\limits_{l=1}^{\left\lfloor \frac{s-j}{m}\right\rfloor
}p_{j,k}\Lambda ^{k+1}A_{l}\beta_{s-j-lm}\text{, if }s\geq m%
\end{array}%
\right.  \label{MainResultsTheoremRecursiveFormulas3}
\end{eqnarray}%
where $u_i$ and $v_i$ are the vectors defined in $\eqref{BiorthogonalVectors}$ and $\eqref{BiorthogonalVectors_v}$, $p_{j,i}$ and $r_{l}$ are the polynomials defined in \eqref{pPolysValWRightIndZero} and \eqref{rPolysVal}, $\left\lfloor {}\right\rfloor $ denotes the floor function, $A_k$ denotes the matrix $\frac{1}{k!}\frac{d^k A}{d\varepsilon^k}(0)$, and $\Lambda$ is the matrix defined in \eqref{PartialInverseOfTheJordanNormalFormOfANot}.

\begin{corollary}\label{MurdockClarkMyResponseCorollary}
\label{Calculation kth order by matrices corollary}The calculation of the $k$%
th order terms, $\alpha _{k}~$and $\beta_{k}$, requires only the matrices $%
A_{0} $, \ldots , $A_{\left\lfloor \frac{m+k-1}{m}\right\rfloor }$.
\end{corollary}

\begin{corollary}\label{2ndOrderCoeffCorollary}
\label{Second order coefficients corollary}The coefficients of those Puiseux
series up to second order are given by%
\begin{eqnarray*}
\alpha _{1} &=&\left( -\frac{\frac{\partial f}{\partial \varepsilon }\left(
0,\lambda _{0}\right) }{\frac{1}{m!}\frac{\partial ^{m}f}{\partial \lambda
^{m}}\left( 0,\lambda _{0}\right) }\right) ^{1/m}=\left(
v_{m},A_{1}u_{1}\right) ^{1/m}\text{,} \\
\alpha _{2} &=&\left\{ 
\begin{array}{c}
\frac{-\left( \alpha _{1}^{m+1}\frac{1}{\left( m+1\right) !}\frac{\partial
^{m+1}f}{\partial \lambda ^{m+1}}\left( 0,\lambda _{0}\right) +\alpha _{1}%
\frac{\partial ^{2}f}{\partial \lambda \partial \varepsilon }\left(
0,\lambda _{0}\right) +\frac{1}{2}\frac{\partial ^{2}f}{\partial \varepsilon
^{2}}\left( 0,\lambda _{0}\right) \right) }{m\alpha _{1}^{m-1}\left( \frac{1%
}{m!}\frac{\partial ^{m}f}{\partial \lambda ^{m}}\left( 0,\lambda
_{0}\right) \right) }\text{, if }m=1 \\ 
\frac{-\left( \alpha _{1}^{m+1}\frac{1}{\left( m+1\right) !}\frac{\partial
^{m+1}f}{\partial \lambda ^{m+1}}\left( 0,\lambda _{0}\right) +\alpha _{1}%
\frac{\partial ^{2}f}{\partial \lambda \partial \varepsilon }\left(
0,\lambda _{0}\right) \right) }{m\alpha _{1}^{m-1}\left( \frac{1}{m!}\frac{%
\partial ^{m}f}{\partial \lambda ^{m}}\left( 0,\lambda _{0}\right) \right) }%
\text{, if }m>1%
\end{array}%
\right. \\
&=&\left\{ 
\begin{array}{c}
\left( v_{1},\left( A_{2}-A_{1}\Lambda A_{1}\right) u_{1}\right) \text{, if }%
m=1 \\ 
\frac{\left( v_{m-1},A_{1}u_{1}\right) +\left( v_{m},A_{1}u_{2}\right) }{%
m\alpha _{1}^{m-2}}\text{, if }m>1%
\end{array}%
\right. \text{,} \\
\beta_{0} &=&u_{1}\text{,}\\
\beta_{1} &=&\left\{ 
\begin{array}{c}
-\Lambda A_{1}u_{1}\text{, if }m=1 \\ 
\alpha _{1}u_{2}\text{, if }m>1%
\end{array}%
\right. \text{,} \\
\beta_{2} &=&\left\{ 
\begin{array}{c}
\left( -\Lambda A_{2}+\left( \Lambda A_{1}\right) ^{2}-\alpha _{1}\Lambda
^{2}A_{1}\right) u_{1}\text{, if }m=1 \\ 
-\Lambda A_{1}u_{1}+\alpha _{2}u_{2}\text{, if }m=2 \\ 
\alpha _{2}u_{2}+\alpha _{1}^{2}u_{3}\text{, if }m>2%
\end{array}%
\right. \text{,}
\end{eqnarray*}%
where $f\left( \varepsilon ,\lambda \right) :=\det \left( \lambda I-A\left(
\varepsilon \right) \right) $.
\end{corollary}
\end{theorem}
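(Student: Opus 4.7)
The plan is to substitute the Puiseux ansatz into the eigenvalue equation $A(\varepsilon)x = \lambda x$, use the Jordan structure of $A(0)$ together with the partial inverse $\Lambda$ to solve the resulting recursion by the method of undetermined coefficients, and then unwind the combinatorics to match the stated closed forms. Theorem \ref{Generic Condition Theorem} already establishes that the Jordan normal form of $A(0)$ at $\lambda_0$ consists of a single $m\times m$ block and that the $\lambda_0$-group is a single convergent Puiseux series in $\varepsilon^{1/m}$ with $\alpha_1 \neq 0$; existence and convergence of the accompanying eigenvector Puiseux series is a standard fact of analytic perturbation theory (\cite[\S II.1.8]{Kato1995}). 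Setting $t = \zeta^{h}\varepsilon^{1/m}$, writing $A(t^{m}) = A_0 + \sum_{k\geq 1}A_k t^{km}$, and substituting the ansatz, matching the coefficient of $t^{l}$ yields the core recursion
\begin{equation*}
(A_0 - \lambda_0 I)\beta_l = R_l, \qquad R_l := \sum_{k=1}^{l}\alpha_k \beta_{l-k} - \sum_{k=1}^{\lfloor l/m \rfloor} A_k \beta_{l-km}, \qquad l \geq 0,
\end{equation*}
with $\beta_0$ an eigenvector for $\lambda_0$, which after imposing \eqref{MainResultsTheoremNormalizationCondition} forces $\beta_0 = u_1$.

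Next, using the Jordan and dual chains from \eqref{BiorthogonalVectors}--\eqref{BiorthogonalVectors_v} with $(v_i,u_j)=\delta_{ij}$ and the partial inverse $\Lambda$ from \eqref{PartialInverseOfTheJordanNormalFormOfANot} (whose key properties will be collected in Appendix A: $(A_0-\lambda_0 I)\Lambda = I - u_m v_m^{*}$, $\Lambda(A_0-\lambda_0 I) = I - u_1 v_1^{*}$, $\Lambda u_i = u_{i+1}$ for $i<m$, $\Lambda u_m = 0$, and $\Lambda$ inverts $A_0-\lambda_0 I$ on the invariant complement), I read off that the recursion is solvable iff $(v_m, R_l)=0$ and that the normalization \eqref{MainResultsTheoremNormalizationCondition} singles out $\beta_l = \Lambda R_l$ for $l\geq 1$. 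Thus every $\alpha_s$ is to be extracted from a compatibility condition along the dual chain, and every $\beta_s$ from a single application of $\Lambda$. For $s<m$ the $A_k$-sum is empty and $R_s$ has no $u_m$-component, so compatibility is automatic; induction on $s$ together with the defining identity $\sum_{k=1}^{s}\alpha_k\, p_{s-k,i-1} = p_{s,i}$ for the polynomials in \eqref{pPolysValWRightIndZero} yields $\beta_s = \sum_{i=0}^{s} p_{s,i} u_{i+1}$, matching the $s<m$ branch of \eqref{MainResultsTheoremRecursiveFormulas3}. At $l=m$ the coupling term $A_1 u_1$ first enters $R_l$; pairing with $v_m$ and using $(v_m,\beta_{m-1}) = p_{m-1,m-1} = \alpha_1^{m-1}$ gives $\alpha_1^{m} = (v_m, A_1 u_1)$, which Jacobi's formula in the Jordan basis converts into the determinantal ratio \eqref{MainResultsTheoremRecursiveFormulas1}.

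For general $s \geq 1$, formula \eqref{MainResultsTheoremRecursiveFormulas2} for $\alpha_s$ is obtained from the compatibility at level $l = s + m - 1$ (the smallest level at which $\alpha_s$ couples non-trivially to $v_m$). The crucial point is that $\beta_l = \Lambda R_l$ can be unfolded recursively, so this higher-level compatibility can be rewritten purely in terms of $\alpha_1,\ldots,\alpha_{s-1}$, $\beta_0,\ldots,\beta_{s-1}$, and $A_k$ for $k \leq \lfloor(m+s-1)/m\rfloor$. Propagating the inner product with $v_m$ through the dual-chain shift $(A_0-\lambda_0 I)^{*} v_{m-i} = v_{m-i+1}$ redistributes contributions along the dual chain into the products $p_{j,i}(v_{m-i}, A_k\beta_{\cdot})$ and isolates $\alpha_s$ with coefficient $m\alpha_1^{m-1}$ (the formal derivative of $\alpha_1^{m}$); the remaining pure $\alpha$-products collect into $r_{s-1}$ of \eqref{rPolysVal}. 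Then \eqref{MainResultsTheoremRecursiveFormulas3} for $\beta_s$ with $s\geq m$ follows by applying $\Lambda$ to $R_s$ and iterating $\Lambda^{k+1} u_{i+1} = u_{i+k+2}$ (truncated once the chain is exhausted) on the $A_l$-terms. The main obstacle is precisely this combinatorial bookkeeping: showing that the intertwined sums produced by iterating $\Lambda$ and propagating compatibility along the dual chain collapse to exactly the polynomial families $\{p_{j,i}\}$ and $\{r_l\}$ in the displayed form, with $\alpha_s$ emerging with the clean coefficient $m\alpha_1^{m-1}$. Corollary \ref{MurdockClarkMyResponseCorollary} then follows from the bound $k \leq \lfloor(m+k-1)/m\rfloor$ on the $A_k$-indices in \eqref{MainResultsTheoremRecursiveFormulas2}--\eqref{MainResultsTheoremRecursiveFormulas3}, and Corollary \ref{2ndOrderCoeffCorollary} by specializing to $s = 1, 2$ and converting inner products into derivatives of $f(\varepsilon,\lambda) = \det(\lambda I - A(\varepsilon))$ via the same Jacobi-formula computation used to derive \eqref{MainResultsTheoremRecursiveFormulas1}.
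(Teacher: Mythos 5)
Your proposal is correct and follows essentially the same route as the paper: the core recursion $(A_0-\lambda_0 I)\beta_l = R_l$ with $\beta_l = \Lambda R_l$ enforced by the normalization, the compatibility condition $(v_m,R_{m+s-1})=0$ to extract $\alpha_s$ with coefficient $m\alpha_1^{m-1}$, and the collapse of the unfolded sums into the $p_{j,i}$ and $r_l$ polynomials are exactly the paper's key lemma and its consequences. The only cosmetic differences are that the paper obtains the determinantal ratio for $\alpha_1^m$ from the implicit-function-theorem analysis of Theorem \ref{Generic Condition Theorem} rather than Jacobi's formula, and shifts along the dual chain via $\Lambda^*v_i=v_{i-1}$ rather than $(A_0-\lambda_0 I)^*$.
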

\begin{remark}\label{RemarkSteps}
Suppose we want to calculate the terms $\alpha_{k+1},\beta_{k+1}$, where $k\geq 2$, using the explicit recursive formulas given in the theorem.  We may assume we already known or have calculated
\begin{eqnarray} \label{Step1}
A_{0}, \ldots , A_{\left\lfloor \frac{m+k}{m}\right\rfloor} \text{, }\{r_j\}_{j=1}^{k-1}\text{, } \{\alpha_j\}_{j=1}^{k}\text{, } \{\beta_j\}_{j=0}^{k} \text{, }\{\{p_{j,i}\}_{j=i}^{k}\}_{i=0}^{k}.
\end{eqnarray}
We need these to calculate $\alpha_{k+1},\beta_{k+1}$ and the steps to do this are indicated by the following arrow diagram:
\begin{eqnarray}\label{TheSteps}
\eqref{Step1}\overset{\eqref{rPolysRecursFormula}}{\rightarrow} r_{k} \overset{\eqref{MainResultsTheoremRecursiveFormulas2}}{\rightarrow}\alpha_{k+1}\overset{\eqref{pPolysRecursFormula}}{\rightarrow}\{p_{k+1,i}\}_{i=0}^{k+1}\overset{\eqref{MainResultsTheoremRecursiveFormulas3}}{\rightarrow}\beta_{k+1}.
\end{eqnarray}
After we have followed these steps we not only will have calculated $\alpha_{k+1},\beta_{k+1}$ but we will now know
\begin{eqnarray}\label{Back2Step1}
A_{0}, \ldots , A_{\left\lfloor \frac{m+k+1}{m}\right\rfloor} \text{, }\{r_j\}_{j=1}^{k}\text{, } \{\alpha_j\}_{j=1}^{k+1}\text{, } \{\beta_j\}_{j=0}^{k+1} \text{, }\{\{p_{j,i}\}_{j=i}^{k+1}\}_{i=0}^{k+1}
\end{eqnarray}
as well.  But these are the terms in \eqref{Step1} for $k+1$ and so we may repeat the steps indicated above to calculate $\alpha_{k+2},\beta_{k+2}$.

It is in this way we see how all the higher order terms can be calculated using the results of this paper.
\end{remark}

\subsection*{Example}
In order to illustrate these steps we give the following example which recursively calculates the third order terms for $m\geq 3$.

The goal is to determine $\alpha_3,\beta_3$. To do this we follow the steps indicated in the above remark with $k=2$.  The first step is to collect the terms in \eqref{Step1}.  Assuming $A_0$, $A_1$ are known then by \eqref{pPolysValWRightIndZero}, \eqref{rPolysVal}, Corollary \ref{2ndOrderCoeffCorollary}, and Proposition \ref{Appendix B Proposition} we have
\begin{equation*}
\begin{array}{c}
A_0 \text{, } A_1 \text{, } r_1 = 0 \text{, } \alpha _{1} =\left(
v_{m},A_{1}u_{1}\right) ^{1/m}\text{, }\alpha _{2}=\frac{\left( v_{m-1},A_{1}u_{1}\right) +\left( v_{m},A_{1}u_{2}\right) }{%
m\alpha _{1}^{m-2}}, \\ 
\beta_{0} = u_{1}\text{, }\beta_{1}=\alpha _{1}u_{2} \text{, }\beta_{2}=\alpha _{2}u_{2}+\alpha _{1}^{2}u_{3}, \\ 
p_{0,0}=1 \text{, } p_{1,0}=0 \text{, } p_{1,1}=\alpha_1 \text{, } p_{2,0}=0 \text{, } p_{2,1}=\alpha_2 \text{, } p_{2,2}=\alpha_1^2.%
\end{array}%
\end{equation*}

The next step is to determine $r_2$ using the recursive formula for the $r_l$'s given in \eqref{rPolysRecursFormula}.  We find that
\begin{eqnarray*}
r_2&=&\frac{1}{2\alpha_1}\tsum_{j=1}^{1}[(3-j)m-(m+j)]\alpha_{3-j}r_j+ \frac{m}{2}\alpha_1^{m-2}\tsum_{j=1}^{1}[(3-j)m-(m+j)]\alpha_{3-j}\alpha_{j+1} \notag\\
&=&\frac{m(m-1)}{2}\alpha_1^{m-2}\alpha_2^2.
\end{eqnarray*}

Now, since $r_2$ is determined, we can use the recursive formula in \eqref{MainResultsTheoremRecursiveFormulas2} for the $\alpha_s$'s to calculate $\alpha_3$.  In doing so we find that
\begin{eqnarray*}
\alpha _{3} &=&\frac{-r_{2}+\sum\limits_{i=0}^{\min
\{3,m\}-1}\sum\limits_{j=i}^{2}p_{j,i}\left(
v_{m-i},\sum\limits_{k=1}^{\left\lfloor \frac{m+2-j}{m}\right\rfloor
}A_{k}\beta _{m+2-j-km}\right) }{m\alpha _{1}^{m-1}} \\
&=&\frac{-r_{2}+p_{2,1}\left( v_{m-1},A_{1}\beta _{0}\right) +p_{0,0}\left(
v_{m},A_{1}\beta _{2}\right) }{m\alpha _{1}^{m-1}}+ \\
&&\frac{p_{2,2}\left( v_{m-2},A_{1}\beta _{0}\right) +p_{1,1}\left(
v_{m-1},A_{1}\beta _{1}\right) }{m\alpha _{1}^{m-1}} \\
&=&\frac{-\frac{m(m-1)}{2}\alpha _{1}^{m-2}\alpha _{2}^{2}+\alpha _{2}\left(
v_{m-1},A_{1}u_{1}\right) +\left( v_{m},A_{1}(\alpha _{2}u_{2}+\alpha
_{1}^{2}u_{3})\right) }{m\alpha _{1}^{m-1}}+ \\
&&\frac{\alpha _{1}^{2}\left( v_{m-2},A_{1}u_{1}\right) +\alpha _{1}\left(
v_{m-1},A_{1}\alpha _{1}u_{2}\right) }{m\alpha _{1}^{m-1}} \\
&=&\left( \frac{3-m}{2}\right) \alpha _{1}^{-1}\alpha _{2}^{2}+\frac{\left(
v_{m-2},A_{1}u_{1}\right) +\left( v_{m-1},A_{1}u_{2}\right) +\left(
v_{m},A_{1}u_{3}\right) }{m\alpha _{1}^{m-3}}.
\end{eqnarray*}

Next, since $\alpha_3$ is determined, we can use \eqref{pPolysRecursFormula} to calculate $\{p_{3,i}\}_{i=0}^{3}$.  In this case though it suffices to use Proposition \ref{Appendix B Proposition} and in doing so we find that
\begin{eqnarray*}
p_{3,0}=0\text{, }p_{3,1}=\alpha_3\text{, } p_{3,2}=2\alpha_1\alpha_2\text{, } p_{3,3}=\alpha_1^3.
\end{eqnarray*}

Finally, we can compute $\beta_3$ using the recursive formula in \eqref{MainResultsTheoremRecursiveFormulas3} for the $\beta_s$'s.  In doing so we find that
\begin{eqnarray*}
\beta_{3} &=&\left\{ 
\begin{array}{c}
\sum\limits_{i=0}^{3}p_{3,i}u_{i+1}\text{, if }m> 3 \\ 
\sum\limits_{i=0}^{m-1}p_{3,i}u_{i+1}-\sum\limits_{j=0}^{3-m}\sum%
\limits_{k=0}^{j}\sum\limits_{l=1}^{\left\lfloor \frac{3-j}{m}\right\rfloor
}p_{j,k}\Lambda ^{k+1}A_{l}\beta_{3-j-lm}\text{, if }m=3
\end{array}
\right.\\
&=&\left\{ 
\begin{array}{c}
p_{3,1}u_2+p_{3,2}u_3+p_{3,3}u_4\text{, if }m> 3 \\ 
\sum\limits_{i=0}^{2}p_{3,i}u_{i+1}-\Lambda A_{1}\beta_0\text{, if }m=3
\end{array}
\right.\\
&=&\left\{ 
\begin{array}{c}
\alpha_3u_2+2\alpha_1\alpha_2u_3+\alpha_1^3u_4\text{, if }m> 3 \\ 
\alpha_3u_2+2\alpha_1\alpha_2u_3-\Lambda A_{1}u_1\text{, if }m=3.
\end{array}
\right.
\end{eqnarray*}
This completes the calculation of the third order terms, $\alpha_3,\beta_3$, when $m\geq 3$.

\section{Proofs}

This section contains the proofs of the results of this paper.  We begin by
proving Theorem \ref{Generic Condition Theorem} of \S 2 on conditions
equivalent to the generic condition.  We next follow this up with the proof
of the main result of this paper Theorem \ref{Main Results Theorem}.  We
finish by proving the Corollaries \ref{Calculation kth order by matrices
corollary} and \ref{Second order coefficients corollary}.

\subsection{Proof of Theorem \protect\ref{Generic Condition Theorem}}
To prove this theorem we will prove the following chain of statements (i)$%
\Rightarrow $(ii)$\Rightarrow $(iii)$\Rightarrow $(iv)$\Rightarrow $(i).

We begin by proving (i)$\Rightarrow$(ii).  Define $%
f\left( \varepsilon ,\lambda \right) :=\det \left( \lambda I-A\left(
\varepsilon \right) \right) $ and suppose (i) is true.  Then $f$ is an analytic function of $\left(
\varepsilon ,\lambda \right) $ near $\left( 0,\lambda _{0}\right) $ since
the matrix elements of $A\left( \varepsilon \right) $ are analytic functions
of $\varepsilon $ in a neighborhood of the origin and the determinant of a
matrix is a polynomial in its matrix elements.  Also we have $f\left(
0,\lambda _{0}\right) =0$ and $\frac{\partial f}{\partial \varepsilon }%
\left( 0,\lambda _{0}\right) \neq 0$.  Hence by the holomorphic implicit
function theorem \cite[\S 1.4 Theorem 1.4.11]{Krantz1992} there exists a unique
solution, $\varepsilon \left( \lambda \right) $, in a neighborhood of $%
\lambda =\lambda _{0}$ with $\varepsilon \left( \lambda _{0}\right) =0$ to
the equation $f\left( \varepsilon ,\lambda \right) =0$, which is analytic at 
$\lambda =\lambda _{0}$.  We now show that $\varepsilon \left( \lambda
\right) $ has a zero there of order $m$ at $\lambda =\lambda _{0}$.  First, the properties of $\varepsilon(\lambda)$ imply there exists $\varepsilon _{q}\neq 0$ and $q\in\mathbb{N}$ such that $\varepsilon \left( \lambda \right) =\varepsilon _{q}\left(\lambda -\lambda _{0}\right) ^{q}+O\left( \left( \lambda -\lambda_{0}\right) ^{q+1}\right) $, for $|\lambda-\lambda_0|<<1$.  Next, by hypothesis $\lambda _{0}$ is an eigenvalue of $A\left( 0\right) $ of algebraic multiplicity $m$ hence $\partial ^{i}f\backslash \partial \lambda ^{i}\left( 0,\lambda _{0}\right) $ $=0$ for $0\leq i\leq m-1$ but $\partial ^{m}f\backslash \partial \lambda^{m}\left( 0,\lambda _{0}\right) $ $\not=0$.  Combining this with the fact that $f\left( 0,\lambda _{0}\right) =0$ and $\frac{\partial f}{\partial \varepsilon }\left( 0,\lambda _{0}\right) \neq 0$ we have
\begin{equation}
f\left( \varepsilon ,\lambda \right) =a_{10}\varepsilon +a_{0m}\left(
\lambda -\lambda _{0}\right) ^{m}+\sum_{\substack{ i+j\geq 2\text{, }i,j\in 
\mathbb{N}
\\ \left( i,j\right) \not\in \{\left( 0,j\right) \text{:}j\leq m\}}}a_{ij}\varepsilon ^{i}\left( \lambda -\lambda _{0}\right) ^{j}
\label{SeriesRepresentationForf}
\end{equation}%
for $|\varepsilon|+|\lambda-\lambda_0|<<1$, where $a_{10}=\frac{\partial f}{\partial \varepsilon }\left( 0,\lambda
_{0}\right) \neq 0$ and $a_{0m}=\frac{1}{m!}\frac{\partial ^{m}f}{\partial
\lambda ^{m}}\left( 0,\lambda _{0}\right) \neq 0$.  Then
using the expansions of $f\left( \varepsilon ,\lambda \right) $ and $\varepsilon \left( \lambda \right) $ together with the identity $f\left(
\varepsilon \left( \lambda \right) ,\lambda \right) =0$ for $|\lambda
-\lambda _{0}|<<1$, we find that 
$q=m$ and 
\begin{equation}
\varepsilon _{m}=-\frac{a_{0m}}{a_{10}}=-\frac{\frac{1}{m!}\frac{\partial
^{m}\det \left( \lambda I-A\left( \varepsilon \right) \right) }{\partial
\lambda ^{m}}\Big |_{\left( \lambda ,\varepsilon \right) =\left( \lambda
_{0},0\right) }}{\frac{\partial }{\partial \varepsilon }\det \left( \lambda
I-A\left( \varepsilon \right) \right) \big |_{\left( \lambda ,\varepsilon
\right) =\left( \lambda _{0},0\right) }}\text{.}
\label{RelatedToFirstOrderTermOfPerturbedEigenvalues}
\end{equation}%
Therefore we conclude that $\varepsilon \left( \lambda \right) $ has a zero
of order $m$ at $\lambda =\lambda _{0}$, which proves (ii).

Next, we prove (ii)$\Rightarrow$(iii).  Suppose (ii) is true.  The
first part of proving (iii) involves inverting $\varepsilon \left( \lambda
\right) $ near $\varepsilon =0$ and $\lambda =\lambda _{0}$.  To do this we
expand $\varepsilon \left( \lambda \right) $ in a power series about $%
\lambda =\lambda _{0}$ and find that $\varepsilon \left( \lambda \right)
=g(\lambda )^{m}$ where%
\begin{equation*}
g(\lambda )=\left( \lambda -\lambda _{0}\right) \left( \varepsilon
_{m}+\sum_{k=m+1}^{\infty }\varepsilon _{k}\left( \lambda -\lambda
_{0}\right) ^{k-m}\right) ^{1/m}
\end{equation*}%
and we are taking any fixed branch of the $m$th root that is analytic at $%
\varepsilon _{m}$.  Notice that, for $\lambda $ in a small enough
neighborhood of $\lambda _{0}$, $g$ is an analytic function, $g\left(
\lambda _{0}\right) =0$, and $\frac{dg}{d\lambda }(\lambda _{0})=\varepsilon
_{m}^{1/m}\neq 0$.  This implies, by the inverse function theorem for
analytic functions, that for $\lambda $ in a small enough neighborhood of $%
\lambda _{0}$ the analytic function $g\left( \lambda \right) $ has an
analytic inverse $g^{-1}\left( \varepsilon \right) $ in a neighborhood of $%
\varepsilon =0$ with $g^{-1}\left( 0\right) =\lambda _{0}$.  Define a
multivalued function $\lambda \left( \varepsilon \right) $, for sufficiently
small $\varepsilon $, by $\lambda \left( \varepsilon \right) :=g^{-1}\left(
\varepsilon ^{\frac{1}{m}}\right) $ where by $\varepsilon ^{\frac{1}{m}}$ we
mean all branches of the $m$th root of $\varepsilon $.  We know that $%
g^{-1} $ is analytic at $\varepsilon =0$ so that for sufficiently small $%
\varepsilon $ the multivalued function $\lambda \left( \varepsilon \right) $
is a Puiseux series.  And since $\frac{dg^{-1}}{d\varepsilon }(0)=\left[ 
\frac{dg}{d\lambda }(\lambda _{0})\right] ^{-1}\neq 0$ we have an expansion 
\begin{equation*}
\lambda \left( \varepsilon \right) =g^{-1}\left( \varepsilon ^{\frac{1}{m}%
}\right) =\lambda _{0}+\alpha _{1}\varepsilon ^{\frac{1}{m}%
}+\sum_{k=2}^{\infty }\alpha _{k}\left( \varepsilon ^{\frac{1}{m}}\right)
^{k}\text{.}
\end{equation*}%
Now suppose for fixed $\lambda $ sufficiently near $\lambda _{0}$ and
for sufficiently small $\varepsilon $ we have $\det \left( \lambda I-A\left(
\varepsilon \right) \right) =0$.  We want to show this implies $%
\lambda =\lambda \left( \varepsilon \right) $ for one of the branches of the 
$m$th root.  We know by hypothesis we must have $\varepsilon
=\varepsilon \left( \lambda \right) $.  But as we know this implies that $%
\varepsilon =\varepsilon \left( \lambda \right) =g(\lambda )^{m}$ hence for
some branch of the $m$th root, $b_{m}(\cdotp)$, we have $b_{m}(\varepsilon
)=b_{m}(g(\lambda )^{m})=g(\lambda )$.  But $\lambda $ is near enough to $%
\lambda _{0}$ and $\varepsilon $ is sufficiently small that we may apply the 
$g^{-1}$ to both sides yielding $\lambda =g^{-1}\left( g(\lambda )\right)
=g^{-1}\left( b_{m}(\varepsilon )\right) =\lambda \left( \varepsilon \right) 
$, as desired.  Furthermore, all the $m$ branches $\lambda _{h}\left(
\varepsilon \right) $, $h=0, \ldots ,m-1$ of $\lambda \left( \varepsilon \right) $
are given by taking all branches of the $m$th root of $\varepsilon $ so that%
\begin{equation*}
\lambda _{h}\left( \varepsilon \right) =\lambda _{0}+\alpha_{1}\zeta
^{h}\varepsilon ^{\frac{1}{m}}+\sum_{k=2}^{\infty }\alpha_{k}\left( \zeta
^{h}\varepsilon ^{\frac{1}{m}}\right) ^{k}
\end{equation*}%
for any fixed branch of $\varepsilon^{\frac{1}{m}}$, where $\zeta =e^{\frac{2\pi }{m}i}$ and 
\begin{equation}
\alpha_{1}=\frac{dg^{-1}}{d\varepsilon }(0)=\left[ \frac{dg}{d\lambda }%
(\lambda _{0})\right] ^{-1}=\varepsilon _{m}^{-1/m}\neq 0\text{,}
\label{FirstOrderTermForPerturbedEigenvalues}
\end{equation}
which proves (iii).

Next, we prove (iii)$\Rightarrow $(iv).  Suppose (iii) is true.  Define the function $y\left( \varepsilon \right) :=\lambda _{0}\left(
\varepsilon ^{m}\right) $.  Then $y$ is analytic at $\varepsilon =0$ and $%
\frac{dy}{d\varepsilon }\left( 0\right) =\lambda _{1}\neq 0$.  Also we have
for $\varepsilon $ sufficiently small $\det \left( y\left( \varepsilon
\right) I-A\left( \varepsilon ^{m}\right) \right) =0$.  Consider the
inverse of $y\left( \varepsilon \right) $, $y^{-1}\left( \lambda \right) $.  It satisfies $0=\det \left( y\left( y^{-1}\left( \lambda \right) \right)
I-A\left( \left[ y^{-1}\left( \lambda \right) \right] ^{m}\right) \right)
=\det \left( \lambda I-A\left( \left[ y^{-1}\left( \lambda \right) \right]
^{m}\right) \right) $ with $y^{-1}\left( \lambda _{0}\right) =0$, $\frac{%
dy^{-1}}{d\lambda }\left( \lambda _{0}\right) =\alpha_{1}^{-1}$.  Define $%
g\left( \lambda \right) :=\left[ y^{-1}\left( \lambda \right) \right] ^{m}$.  Then $g$ has a zero of order $m$ at $\lambda _{0}$ and $\det \left(
\lambda I-A\left( g\left( \lambda \right) \right) \right) =0$ for $\lambda $
in a neighborhood of $\lambda _{0}$.

Now we consider the analytic matrix $A\left( g\left( \lambda \right) \right)
-\lambda I$ in a neighborhood of $\lambda =\lambda _{0}$ with the constant
eigenvalue $0$.  Because $0$ is an analytic eigenvalue of it then there
exists an analytic eigenvector, $x\left( \lambda \right) $, of $A\left(
g\left( \lambda \right) \right) -\lambda I$ corresponding to the eigenvalue $%
0$ in a neighborhood of $\lambda _{0}$ such that $x\left( \lambda
_{0}\right) \neq 0$. Hence for $\lambda$ near $\lambda_0$ we have
\begin{eqnarray*}
0 &=&\left( A\left( g\left( \lambda \right) \right) -\lambda I\right)
x\left( \lambda \right) \\
&=&\left( A\left( \alpha_{1}^{-m}\left( \lambda -\lambda _{0}\right)
^{m}+O\left( \left( \lambda -\lambda _{0}\right) ^{m+1}\right) \right)
-\left( \lambda -\lambda _{0}\right) I-\lambda _{0}I\right) x\left( \lambda
\right) \\
&=&\left( A\left( 0\right) -\lambda _{0}I\right) x\left( \lambda _{0}\right)
+\left( \left( A\left( 0\right) -\lambda _{0}I\right) \frac{dx}{d\lambda }%
\left( \lambda _{0}\right) -x\left( \lambda _{0}\right) \right) \left(
\lambda -\lambda _{0}\right) +\cdots \\
&&+\left( \left( A\left( 0\right) -\lambda _{0}I\right) \frac{d^{m-1}x}{%
d\lambda ^{m-1}}\left( \lambda _{0}\right) -\frac{d^{m-2}x}{d\lambda ^{m-2}}%
\left( \lambda _{0}\right) \right) \left( \lambda -\lambda _{0}\right) ^{m-1}
\\
&&+\left( \left( A\left( 0\right) -\lambda _{0}I\right) \frac{d^{m}x}{%
d\lambda ^{m}}\left( \lambda _{0}\right) -\frac{d^{m-1}x}{d\lambda ^{m-1}}%
\left( \lambda _{0}\right) +\alpha_{1}^{-m}A^{\prime}(0) x\left( \lambda _{0}\right) \right) \left( \lambda -\lambda
_{0}\right) ^{m} \\
&&+O\left( \left( \lambda -\lambda _{0}\right) ^{m+1}\right) \text{.}
\end{eqnarray*}%
This implies that%
\begin{eqnarray}
&&\left( A\left( 0\right) -\lambda _{0}I\right) x\left( \lambda _{0}\right)= 0,
\left( A\left( 0\right) -\lambda _{0}I\right) \frac{d^jx}{d\lambda^j }\left(
\lambda _{0}\right) = \frac{d^{j-1}x}{d\lambda ^{j-1}}\left(
\lambda _{0}\right) \text{, for }j=1, \ldots, m-1, \notag\\
&&\left( A\left( 0\right) -\lambda _{0}I\right) \frac{d^{m}x}{d\lambda ^{m}}%
\left( \lambda _{0}\right) =\frac{d^{m-1}x}{d\lambda ^{m-1}}\left( \lambda
_{0}\right) -\alpha_{1}^{-m}A^{\prime}(0)
x\left( \lambda _{0}\right) \text{.} \label{MPlus1OrderEquality}
\end{eqnarray}

The first $m$ equations imply that $x\left( \lambda _{0}\right) $, $%
\frac{dx}{d\lambda }\left( \lambda _{0}\right) $, $\ldots $, $\frac{d^{m-1}x%
}{d\lambda ^{m-1}}\left( \lambda _{0}\right) $ is a Jordan chain of length $%
m $ generated by $\frac{d^{m-1}x}{d\lambda ^{m-1}}\left( \lambda _{0}\right) 
$.  Since the algebraic multiplicity of $\lambda _{0}$ for $A\left(
0\right) $ is $m$ this implies that the there is a single $m\times m$ Jordan
block corresponding to the eigenvalue $\lambda _{0}$ where we can take $%
x\left( \lambda _{0}\right) ,\frac{dx}{d\lambda }\left( \lambda _{0}\right)
,...,\frac{d^{m-1}x}{d\lambda ^{m-1}}\left( \lambda _{0}\right) $ as a
Jordan basis. It follows from basic properties of Jordan chains that there exists an eigenvector $v$ of $A(0)^*$ corresponding to the eigenvalue $\lambda_0^*$ such that $%
\left( v,\frac{d^{m-1}x}{d\lambda ^{m-1}}\left( \lambda _{0}\right)
\right) = 1$. Hence%
\begin{eqnarray*}
0 &=& \left((A(0)-\lambda_0I)^*v,\frac{d^{m}x}{d\lambda ^{m}}\left( \lambda _{0}\right)\right) \overset{\eqref{MPlus1OrderEquality}}{=} 1-\alpha_{1}^{-m}\left( v,A^{\prime}(0)
x\left( \lambda _{0}\right) \right)
\end{eqnarray*}%
implying that $\left( v,\frac{dA}{d\varepsilon }\left( 0\right) x\left(
\lambda _{0}\right) \right) =\alpha_{1}^{m}\neq 0$.  Therefore we have
shown that the Jordan normal form of $A\left( 0\right) $ corresponding to
the eigenvalue $\lambda _{0}$ consists of a single $m\times m$ Jordan block
and there exists an eigenvector $u$ of $A\left( 0\right) $ corresponding to
the eigenvalue $\lambda _{0}$ and an eigenvector $v$ of $A\left( 0\right)
^{\ast }$ corresponding to the eigenvalue $\lambda _{0}^{\ast }$ such that $%
\left( v,A^{\prime}(0)u\right) \neq 0$.  This proves (iv).

Finally, we show (iv)$\Rightarrow $(i).  Suppose (iv) is true.  We
begin by noting that since 
\begin{eqnarray*}
\det \left( \lambda _{0}I-A\left( \varepsilon \right) \right) &=&\left( -1\right) ^{n}\det \left( \left( A\left( 0\right) -\lambda
_{0}I\right) +A^\prime(0) \varepsilon \right) +o\left(
\varepsilon \right)
\end{eqnarray*}%
it suffices to show that%
\begin{equation}\label{ProveThisIsNonZero}
S_{n-1}:=\frac{d}{d\varepsilon }\det \left( \left( A\left( 0\right) -\lambda
_{0}I\right) +A^\prime(0) \varepsilon \right) \big |%
_{\varepsilon =0}\neq 0\text{.}
\end{equation}

We will use the result from \cite[Theorem 2.16]{IpsenRehman2008} to prove \eqref{ProveThisIsNonZero}.  Let $A\left( 0\right) -\lambda _{0}I=Y\Sigma X^{\ast }$ be a singular-value
decomposition of the matrix $A\left( 0\right) -\lambda _{0}I$ where $X$, $Y$
are unitary matrices and $\Sigma =\diag(\sigma _{1},\ldots ,\sigma
_{n-1},\sigma _{n})$ with $\sigma _{1}\geq \ldots \geq \sigma _{n-1}\geq
\sigma _{n}\geq 0$ (see \cite[\S 5.7, Theorem 2]{LancasterTismenetsky1985}).  Now
since the Jordan normal form of $A\left( 0\right) $ corresponding to the
eigenvalue $\lambda _{0}$ consists of a single Jordan block this implies
that rank of $A\left( 0\right) -\lambda _{0}I$ is $n-1$.  This implies that 
$\sigma _{1}\geq \ldots \geq \sigma _{n-1}>\sigma _{n}=0$, $u=Xe_{n}$ is an
eigenvalue of $A\left( 0\right) $ corresponding to the eigenvalue $\lambda
_{0}$, $v=Ye_{n}$ is an eigenvalue of $A\left( 0\right) $ corresponding
to the eigenvalue $\lambda _{0}^{\ast }$, and there exist nonzero constants $c_{1}, c_{2}$ such that $u=c_{1}u_{0}$ and $v=c_{2}v_{0}$.

Now using the result of \cite[Theorem 2.16]{IpsenRehman2008} for \eqref{ProveThisIsNonZero} we find that%
\begin{equation*}
S_{n-1}=\det \left( YX^{\ast }\right) \sum_{1\leq i_{1}<\cdots <i_{n-1}\leq
n}\sigma _{i_{1}}\cdots \sigma _{i_{n-1}}\det \left( \left( Y^{\ast
}A^\prime(0) X\right) _{i_{1}\ldots i_{n-1}}\right) \text{,}
\end{equation*}%
where $\left( Y^{\ast }A^\prime(0) X\right) _{i_{1}\ldots
i_{n-1}}$ is the matrix obtained from $Y^{\ast }A^\prime(0) X$ by removing rows and columns $i_{1}\ldots i_{n-1}$. But since $%
\sigma _{n}=0$ and
\begin{eqnarray*}
\left( Y^{\ast }A^\prime(0) X\right) _{1\ldots (n-1)}=e_{n}^{\ast }Y^{\ast }A^\prime(0) Xe_{n}=\left( v,A^\prime(0) u\right)=c_{2}^{\ast }c_{1}\left( v_{0},A^\prime(0) u_{0}\right)
\neq 0
\end{eqnarray*}%
then $S_{n-1}=\det \left( YX^{\ast }\right) \tprod\limits_{j=1}^{n-1}\sigma
_{j}c_{2}^{\ast }c_{1}\left(v_{0},A^\prime(0)
u_{0}\right) \neq 0$.  This completes the proof.
\qquad \endproof

\subsection{Proof of Theorem \protect\ref{Main Results Theorem}}

We begin by noting that our hypotheses imply that statements (ii), (iii),
and (iv) of Theorem \ref{Generic Condition Theorem} are true.  In
particular, statement (iii) implies that there is exactly one convergent
Puiseux series for the $\lambda _{0}$-group whose branches are given by 
\begin{equation*}
\lambda _{h}\left( \varepsilon \right) =\lambda _{0}+\alpha_{1}\zeta
^{h}\varepsilon ^{\frac{1}{m}}+\sum\limits_{k=2}^{\infty }\alpha_{k}\left(
\zeta ^{h}\varepsilon ^{\frac{1}{m}}\right) ^{k}\text{,}
\end{equation*}%
for $h=0,\ldots ,m-1$ and any fixed branch of $\varepsilon^{\frac{1}{m}}$, where $\zeta =e^{\frac{2\pi }{m}i}$ and $\alpha_{1}\neq 0$.  Then by well
known results \cite[\S 6.1.7, Theorem 2]{Baumgartel1985}, \cite[\S II.1.8]{Kato1995} there exists a
convergent Puiseux series for the corresponding eigenvectors whose branches are given by 
\begin{equation*}
x_{h}\left( \varepsilon \right) =\beta_{0}+\sum\limits_{k=1}^{\infty
}\beta_{k}\left( \zeta ^{h}\varepsilon ^{\frac{1}{m}}\right) ^{k}\text{,}
\end{equation*}%
for $h=0,\ldots ,m-1$, where $\beta_{0}$ is an eigenvector of $A_{0}=A\left( 0\right) $ corresponding to the eigenvalue $\lambda _{0}$.  Now if we examine the proof of (ii)$\Rightarrow $(iii) in Theorem \ref%
{Generic Condition Theorem} we see by equation (\ref%
{FirstOrderTermForPerturbedEigenvalues}) that $\alpha_{1}^{m}=\varepsilon
_{m}^{-1}$, where $\varepsilon _{m}$ is given in equation (\ref%
{RelatedToFirstOrderTermOfPerturbedEigenvalues}) in the proof of (i)$%
\Rightarrow $(iii) for Theorem \ref{Generic Condition Theorem}.  Thus we
can conclude that%
\begin{equation}
\label{ValOflambda1^m}
\alpha_{1}^{m}=-\frac{\frac{\partial }{\partial \varepsilon }\det \left(
\lambda I-A\left( \varepsilon \right) \right) |_{\left( \varepsilon ,\lambda
\right) =\left( 0,\lambda _{0}\right) }}{\left( \frac{\frac{\partial ^{m}}{%
\partial \lambda ^{m}}\det \left( \lambda I-A\left( \varepsilon \right)
\right) |_{\left( \varepsilon ,\lambda \right) =\left( 0,\lambda _{0}\right)
}}{m!}\right) }\neq 0\text{.}
\end{equation}

Choose any $m$th root of $\left( v_{m},A_{1}u_{1}\right) $ and denote
it by $\left( v_{m},A_{1}u_{1}\right)^{1/m}$.  By \eqref{ValOflambda1^m} we can just reindexing the Puiseux series \eqref{MainResultsTheoremEigenvaluePuiseuxSeries} and \eqref{MainResultsTheoremEigenvectorPuiseuxSeries} and assume that
\begin{equation*}
\alpha_{1}=\left( -\frac{\frac{\partial }{\partial \varepsilon }\det
\left( \lambda I-A\left( \varepsilon \right) \right) |_{\left( \varepsilon
,\lambda \right) =\left( 0,\lambda _{0}\right) }}{\left( \frac{\frac{%
\partial ^{m}}{\partial \lambda ^{m}}\det \left( \lambda I-A\left(
\varepsilon \right) \right) |_{\left( \varepsilon ,\lambda \right) =\left(
0,\lambda _{0}\right) }}{m!}\right) }\right) ^{1/m}\text{.}
\end{equation*}%
\qquad

Next, we wish to prove that we can choose the perturbed eigenvectors (\ref%
{MainResultsTheoremEigenvectorPuiseuxSeries}) to satisfy the normalization
conditions (\ref{MainResultsTheoremNormalizationCondition}). But this follows by Theorem \ref{Generic Condition Theorem} (iv) and the fact $\beta_0$ is an eigenvector of $A(0)$ corresponding to the eigenvalue $\lambda_0$ since then $\left( v_{1},\beta_{0}\right) \not=0$ and so we may take $\frac{x_{h}\left( \varepsilon \right) }{\left(v_{1},x_{h}\left( \varepsilon \right) \right) }$, for $h=0, \ldots , m-1$, to be the perturbed eigenvectors in \eqref{MainResultsTheoremEigenvectorPuiseuxSeries} that satisfy the normalization conditions (\ref{MainResultsTheoremNormalizationCondition}). 

Now we are ready to begin showing that $\{\alpha_{s}\}_{s=1}^{\infty},\{\beta_{s}\}_{s=0}^{\infty}$ are given by the recursive formulas (\ref%
{MainResultsTheoremRecursiveFormulas1})-(\ref%
{MainResultsTheoremRecursiveFormulas3}). The first key step is proving the following:
\begin{eqnarray}
\label{ZerothKeyEq}
\left( A_{0}-\lambda _{0}I\right) \beta_{s}=-\tsum_{k=1}^{s}\left( A_{\frac{k}{m}%
}-\alpha_{k}I\right) \beta_{s-k}\text{, for }s\geq 1\text{,} \\
\beta_{0}=u_{1}\text{, }\beta_{s}=\Lambda \left( A_{0}-\lambda _{0}I\right) \beta_{s}%
\text{, for }s\geq 1\text{,}  \label{FirstKeyStepThm3.1Proof}
\end{eqnarray}
where we define $A_{\frac{k}{m}}:=0$, if $\frac{k}{m}%
\not\in 
\mathbb{N}
$.

The first equality holds since in a neighborhood of the origin
\begin{eqnarray*}
0 &=&\left( A\left( \varepsilon \right) -\lambda _{0}\left( \varepsilon
\right) I\right) x_{0}\left( \varepsilon \right) =\tsum\limits_{s=0}^{\infty }\left( \tsum\limits_{k=0}^{s}\left( A_{\frac{k}{%
m}}-\alpha_{k}I\right) \beta_{s-k}\right) \varepsilon ^{\frac{s}{m}}\text{.}
\end{eqnarray*}%

The second equality will be proven once we show $\beta_0=u_1$ and $\beta_s\in S:=\func{span}\{Ue_{i}|2\leq i\leq n\}$, for $s\geq 1$, where $U$ is the matrix from $\eqref{TheJordanNormalFormOfANot}$.  This will prove $\eqref{FirstKeyStepThm3.1Proof}$ because $\Lambda \left( A_{0}-\lambda _{0}I\right)$ acts as the identity on $S$ by Proposition \ref{Appendix A Proposition}.\ref{eq:A.1.iii}.  But these follow from the facts that $S=\{x\in\mathbb{C}^{n\times1}|(v_1,x)=0\}$ and the normalization conditions \eqref{MainResultsTheoremNormalizationCondition} imply that $\left( v_{1},\beta_{0}\right) =1$ and $\left(v_{1},\beta_{s}\right) =0$, for $s\geq 1$.

The next key step in this proof is the following lemma:
\begin{lemma}
For all $s\geq 0$ the following identity holds%
\begin{equation}
\left( A_{0}-\lambda _{0}I\right) \beta_{s}=\left\{ 
\begin{array}{c}
\sum\limits_{i=0}^{s}p_{s,i}u_{i}\text{, for }0\leq s\leq m-1 \\ 
\sum\limits_{i=0}^{m}p_{s,i}u_{i}-\sum\limits_{j=0}^{s-m}\sum%
\limits_{k=0}^{j}\sum\limits_{l=1}^{\left\lfloor \frac{s-j}{m}\right\rfloor
}p_{j,k}\Lambda ^{k}A_{l}\beta_{s-j-lm}\text{, for }s\geq m%
\end{array}%
\right.  \label{SecondKeyStepThm3.1Proof}
\end{equation}%
where we define $u_{0}:=0$.
\end{lemma}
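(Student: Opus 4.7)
The plan is to prove \eqref{SecondKeyStepThm3.1Proof} by strong induction on $s$, using the already-established identities \eqref{ZerothKeyEq} and \eqref{FirstKeyStepThm3.1Proof}. The base case $s=0$ is immediate: $\beta_{0}=u_{1}$ is an eigenvector of $A_{0}$ associated with $\lambda_{0}$, so $(A_{0}-\lambda_{0}I)\beta_{0}=0=p_{0,0}u_{0}$.

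For the inductive step, I would first combine the inductive hypothesis \eqref{SecondKeyStepThm3.1Proof} (applied to each $s'<s$) with $\beta_{s'}=\Lambda(A_{0}-\lambda_{0}I)\beta_{s'}$ and the identities $\Lambda u_{i}=u_{i+1}$ for $1\le i\le m-1$, $\Lambda u_{m}=0$, to obtain the expansion
\[
\beta_{s'}=\sum_{i=0}^{m-1}p_{s',i}u_{i+1}-\sum_{j=0}^{s'-m}\sum_{k=0}^{j}\sum_{l=1}^{\lfloor(s'-j)/m\rfloor}p_{j,k}\Lambda^{k+1}A_{l}\beta_{s'-j-lm},
\]
in which the triple sum is vacuous when $s'<m$. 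I would then substitute this expression (together with $\beta_{0}=u_{1}$ for the $k=s$ term) into the recursion
\[
(A_{0}-\lambda_{0}I)\beta_{s}=\sum_{k=1}^{s}\alpha_{k}\beta_{s-k}-\sum_{l=1}^{\lfloor s/m\rfloor}A_{l}\beta_{s-lm}
\]
and reorganize the terms until the right-hand side matches the expression $Q_{s}$ appearing on the right-hand side of \eqref{SecondKeyStepThm3.1Proof}.

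The hard part will be this combinatorial rearrangement, which rests on a single identity for the polynomials $p_{s,i}$, namely
\[
p_{s,i}=\sum_{k=1}^{s-i+1}\alpha_{k}\,p_{s-k,i-1},\qquad i\ge 1,
\]
read off by splitting the defining multi-sum for $p_{s,i}$ on the last index $s_{i}=k$. Applied to the ``main'' contribution $\sum_{k}\alpha_{k}\sum_{i'=0}^{m-1}p_{s-k,i'}u_{i'+1}$, this identity collapses the $k$-sum to produce exactly $\sum_{i=0}^{m}p_{s,i}u_{i}$ (after absorbing $u_{0}=0$ and using $p_{s,0}=0$ for $s\ge 1$). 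For the ``correction'' contributions, which arise only from those $\beta_{s-k}$ with $s-k\ge m$ (hence only for $k\le s-m$), the substitution $j=k+j'$ followed by an exchange of the order of summation in the pair $(k,k')$ yields an inner sum $\sum_{k\ge 1,\,k+k'\le j}\alpha_{k}p_{j-k,k'}\Lambda^{k'+1}$; applying the same $p$-identity once more collapses this to $\sum_{k=1}^{j}p_{j,k}\Lambda^{k}$. Finally, the surviving $-\sum_{l=1}^{\lfloor s/m\rfloor}A_{l}\beta_{s-lm}$ term from the recursion is precisely the $j=k=0$ summand (since $p_{0,0}=1$ and $\Lambda^{0}=I$), so assembling all the pieces produces $Q_{s}$. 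In the subcase $1\le s\le m-1$ the entire $A_{l}$-sum and every correction piece are empty, and what remains reduces immediately to $\sum_{i=0}^{s}p_{s,i}u_{i}$, completing the induction.
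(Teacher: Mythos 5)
Your proposal is correct and follows essentially the same route as the paper's proof: strong induction on $s$, expansion of the lower-order $\beta_{s'}$ via $\beta_{s'}=\Lambda(A_{0}-\lambda_{0}I)\beta_{s'}$ and the shift properties of $\Lambda$, substitution into the recursion \eqref{ZerothKeyEq}, and collapse of the $\alpha_{k}$-convolutions by the identity \eqref{eq:B.1.vi}, with the $-\sum_{l}A_{l}\beta_{s-lm}$ term absorbed as the $j=k=0$ summand. The only cosmetic difference is that the paper phrases the induction as passing from $s$ to $s+1$ and splits the two cases $s+1\le m-1$ and $s+1\ge m$ explicitly.
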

\begin{proof}
The proof is by induction on $s$.  The statement is true for $s=0$ since $p_{0,0}u_{0}=0=\left( A_{0}-\lambda _{0}I\right) \beta_{0}$.  Now suppose it
was true for all $r$ with $0\leq r\leq s$ for some nonnegative integer $s$.  We will show the statement is true for $s+1$ as well.

Suppose $s+1\leq m-1$ then $\left( A_{0}-\lambda _{0}I\right)
\beta_{r}=\tsum_{i=0}^{r}p_{r,i}u_{i}$ for $0\leq r\leq s$ and we must show that 
$\left( A_{0}-\lambda _{0}I\right) \beta_{s+1}=\tsum_{i=0}^{s+1}p_{s+1,i}u_{i}$.  Well, for $1\leq r\leq s$,
\begin{eqnarray}
\beta_{r}\overset{\eqref{FirstKeyStepThm3.1Proof}}{=} \Lambda \left(
A_{0}-\lambda _{0}I\right) \beta_{r}=\tsum_{i=0}^{r}p_{r,i}\Lambda
u_{i}\overset{(\text{\ref{eq:A.1.iv}})}{=}\tsum_{i=1}^{r}p_{r,i}u_{i+1}\text{.} \label{x_r1stIndStep}
\end{eqnarray}
Hence the statement is true if $s+1\leq m-1$ since 
\begin{eqnarray*}
&&\left( A_{0}-\lambda _{0}I\right) \beta_{s+1} \overset{\eqref{ZerothKeyEq}}{=}-\tsum_{k=1}^{s+1}\left( A_{%
\frac{k}{m}}-\alpha_{k}I\right) \beta_{s+1-k}
\overset{\eqref{x_r1stIndStep}}{=}\tsum_{k=1}^{s+1}\tsum_{i=0}^{s+1-k}\alpha_{k}p_{s+1-k,i}u_{i+1}\\ 
&&\overset{\eqref{eq:DoubleSumId1}}{=} \tsum_{i=0}^{s}\left(
\tsum_{k=1}^{s+1-i}\alpha_{k}p_{s+1-k,i}\right) u_{i+1}
\overset{(\text{\ref{eq:B.1.vi}})}{=}\tsum_{i=0}^{s+1}p_{s+1,i}u_{i}\text{.}
\end{eqnarray*}%

Now suppose that $s+1\geq m$.  The proof is similar to what we just proved.
By the induction hypothesis \eqref{SecondKeyStepThm3.1Proof} is true for $1\leq r\leq s$ and $\beta_{r}\overset{\eqref{FirstKeyStepThm3.1Proof}}{=} \Lambda \left(
A_{0}-\lambda _{0}I\right) \beta_{r}$ thus
\begin{eqnarray}
\beta_{r}\overset{(\text{\ref{eq:A.1.iv}})}{=}\left\{ 
\begin{array}{c}
\tsum\limits_{i=0}^{r}p_{r,i}u_{i+1}\text{, for }0\leq r\leq m-1 \\ 
\sum\limits_{i=0}^{m-1}p_{r,i}u_{i+1}-\sum\limits_{j=0}^{r-m}\sum%
\limits_{k=0}^{j}\sum\limits_{l=1}^{\left\lfloor \frac{r-j}{m}\right\rfloor
}p_{j,k}\Lambda ^{k+1}A_{l}\beta_{r-j-lm}\text{, for }r\geq m \text{.}
\end{array}%
\right. \label{x_r2stIndStep}
\end{eqnarray}%
Hence we have 
\begin{eqnarray*}
\left( A_{0}-\lambda _{0}I\right) \beta_{s+1} &\overset{\eqref{ZerothKeyEq}}{=}&-\tsum_{k=1}^{s+1}\left( A_{%
\frac{k}{m}}-\alpha_{k}I\right) \beta_{s+1-k} \\
&\overset{\eqref{x_r2stIndStep}}{=}&-\tsum\limits_{l=1}^{\left\lfloor \frac{s+1}{m}\right\rfloor
}A_{l}\beta_{s+1-lm}+\tsum\limits_{k=1}^{s+1-m}\tsum\limits_{i=0}^{m-1}\alpha_{k}p_{s+1-k,i}u_{i+1} \\
&&-\tsum\limits_{k=1}^{s+1-m}\tsum\limits_{j=0}^{s+1-k-m}\tsum%
\limits_{i=0}^{j}\tsum\limits_{l=1}^{\left\lfloor \frac{s+1-k-j}{m}%
\right\rfloor }\alpha_{k}p_{j,i}\Lambda ^{i+1}A_{l}\beta_{s+1-k-j-lm} \\
&&+\tsum\limits_{k>s+1-m}^{s+1}\tsum\limits_{i=0}^{s+1-k}\alpha_{k}p_{s+1-k,i}u_{i+1} \\
&\overset{\eqref{eq:DoubleSumId1}}{=}&-\tsum\limits_{l=1}^{\left\lfloor \frac{s+1}{m}\right\rfloor
}A_{l}\beta_{s+1-lm}+\tsum\limits_{i=0}^{m-1}\left(
\tsum\limits_{k=1}^{s+1-i}\alpha_{k}p_{s+1-k,i}\right) u_{i+1} \\
&&-\tsum\limits_{k=1}^{s+1-m}\tsum\limits_{j=0}^{s+1-k-m}\tsum%
\limits_{i=0}^{j}\tsum\limits_{l=1}^{\left\lfloor \frac{s+1-k-j}{m}%
\right\rfloor }\alpha_{k}p_{j,i}\Lambda ^{i+1}A_{l}\beta_{s+1-k-j-lm} \\
&\overset{(\text{\ref{eq:B.1.vi})}}{=}&-\tsum\limits_{l=1}^{\left\lfloor \frac{s+1}{m}\right\rfloor
}A_{l}\beta_{s+1-lm}+\tsum\limits_{i=0}^{m}p_{s+1,i}u_{i} \\
&&-\tsum\limits_{k=1}^{s+1-m}\tsum\limits_{j=0}^{s+1-k-m}\tsum%
\limits_{i=0}^{j}\tsum\limits_{l=1}^{\left\lfloor \frac{s+1-k-j}{m}%
\right\rfloor }\alpha_{k}p_{j,i}\Lambda ^{i+1}A_{l}\beta_{s+1-k-j-lm}\text{.}
\end{eqnarray*}
Now let $a_{k,j,i}:=\sum\limits_{l=1}^{\left\lfloor \frac{s+1-k-j}{m}%
\right\rfloor }\alpha_{k}p_{j,i}\Lambda ^{i+1}A_{l}\beta_{s+1-k-j-lm}$.  Then using the sum identity 
\begin{eqnarray*}
&&\tsum\limits_{k=1}^{s+1-m}\tsum\limits_{j=0}^{s+1-k-m}\tsum%
\limits_{i=0}^{j}a_{k,j,i}
\overset{\eqref{eq:DoubleSumId1}}{=}\tsum\limits_{j=0}^{s-m}\tsum\limits_{k=1}^{s+1-j-m}\tsum%
\limits_{i=0}^{j}a_{k,j,i}
=\tsum\limits_{j=0}^{s-m}\tsum\limits_{i=0}^{j}\tsum%
\limits_{k=1}^{s+1-j-m}a_{k,j,i} \\
&&\overset{\eqref{eq:DoubleSumId3}}{=}\tsum\limits_{i=0}^{s-m}\tsum\limits_{j=i}^{s-m}\tsum%
\limits_{k=1}^{s+1-j-m}a_{k,j,i}
\overset{\eqref{eq:DoubleSumId4}}{=}\tsum\limits_{i=0}^{s-m}\tsum\limits_{q=i+1}^{s+1-m}\tsum%
\limits_{k=1}^{q-i}a_{k,q-k,i}
\end{eqnarray*} 
we can concluded that%
\begin{eqnarray*}
\left( A_{0}-\lambda _{0}I\right) \beta_{s+1}
&=&-\tsum\limits_{l=1}^{\left\lfloor \frac{s+1}{m}\right\rfloor
}A_{l}\beta_{s+1-lm}+\tsum\limits_{i=0}^{m}p_{s+1,i}u_{i} \\
&&-\tsum\limits_{i=0}^{s-m}\tsum\limits_{q=i+1}^{s+1-m}\tsum%
\limits_{k=1}^{q-i}\tsum\limits_{l=1}^{\left\lfloor \frac{s+1-q}{m}%
\right\rfloor }\alpha_{k}p_{q-k,i}\Lambda ^{i+1}A_{l}\beta_{s+1-q-lm} \\
&=&-\tsum\limits_{l=1}^{\left\lfloor \frac{s+1}{m}\right\rfloor
}A_{l}\beta_{s+1-lm}+\tsum\limits_{i=0}^{m}p_{s+1,i}u_{i} \\
&&-\tsum\limits_{i=0}^{s-m}\tsum\limits_{q=i+1}^{s+1-m}\tsum%
\limits_{l=1}^{\left\lfloor \frac{s+1-q}{m}\right\rfloor }\left(
\tsum\limits_{k=1}^{q-i}\alpha_{k}p_{q-k,i}\right) \Lambda
^{i+1}A_{l}\beta_{s+1-q-lm} \\
&\overset{(\text{\ref{eq:B.1.vi}})}{=}&-\tsum\limits_{l=1}^{\left\lfloor \frac{s+1}{m}\right\rfloor
}A_{l}\beta_{s+1-lm}+\tsum\limits_{i=0}^{m}p_{s+1,i}u_{i} \\
&&-\tsum\limits_{i=0}^{s-m}\tsum\limits_{q=i+1}^{s+1-m}\tsum%
\limits_{l=1}^{\left\lfloor \frac{s+1-q}{m}\right\rfloor }p_{q,i+1}\Lambda
^{i+1}A_{l}\beta_{s+1-q-lm} \\
&\overset{\eqref{eq:DoubleSumId2}}{=}&-\tsum\limits_{l=1}^{\left\lfloor \frac{s+1}{m}\right\rfloor
}A_{l}\beta_{s+1-lm}+\tsum\limits_{i=0}^{m}p_{s+1,i}u_{i} \\
&&-\tsum_{q=1}^{s+1-m}%
\tsum_{i=0}^{q-1}\tsum%
\limits_{l=1}^{\left\lfloor \frac{s+1-q}{m}\right\rfloor }p_{q,i+1}\Lambda
^{i+1}A_{l}\beta_{s+1-q-lm}\\
&\overset{\eqref{pPolysValWRightIndZero}}{=}&\tsum\limits_{i=0}^{m}p_{s+1,i}u_{i}-\tsum\limits_{j=0}^{s+1-m}\tsum%
\limits_{k=0}^{j}\tsum\limits_{l=1}^{\left\lfloor \frac{s+1-j}{m}%
\right\rfloor }p_{j,k}\Lambda ^{k}A_{l}\beta_{s+1-j-lm}\text{.}
\end{eqnarray*}
But this is the statement we needed to prove for $s+1\geq m$.  Therefore by
induction the statement (\ref{SecondKeyStepThm3.1Proof}) is true for all $%
s\geq 0$ and the lemma is proved.
\qquad \end{proof}

The lemma above is the key to prove the recursive formulas for $\alpha_{s}$
and $\beta_{s}$ as given by (\ref{MainResultsTheoremRecursiveFormulas1})-(\ref%
{MainResultsTheoremRecursiveFormulas3}).  First we prove that $\beta_{s}$ is
given by (\ref{MainResultsTheoremRecursiveFormulas3}).  For $s=0$ we have
already shown $\beta_{0}=u_{1}=p_{0,0}u_{1}$.  So suppose $s\geq 1$.  Then by \eqref{FirstKeyStepThm3.1Proof} and \eqref{SecondKeyStepThm3.1Proof} we find that
\begin{eqnarray*}
\beta_{s}&\overset{(\text{\ref{eq:A.1.iv}})}{=}&\left\{ 
\begin{array}{c}
\sum\limits_{i=0}^{s}p_{s,i}u_{i+1}\text{, if }0\leq s\leq m-1 \\ 
\sum\limits_{i=0}^{m-1}p_{s,i}u_{i+1}-\sum\limits_{j=0}^{s-m}\sum%
\limits_{k=0}^{j}\sum\limits_{l=1}^{\left\lfloor \frac{s-j}{m}\right\rfloor
}p_{j,k}\Lambda ^{k+1}A_{l}\beta_{s-j-lm}\text{, if }s\geq m  \text{.}%
\end{array}%
\right.
\end{eqnarray*}%
This proves that $\beta_{s}$ is given by (\ref%
{MainResultsTheoremRecursiveFormulas3}).

Next we will prove that $\alpha_{s}$ is given by (\ref%
{MainResultsTheoremRecursiveFormulas1}) and (\ref%
{MainResultsTheoremRecursiveFormulas2}).  We start with $s=1$ and prove $%
\alpha_{1}$ is given by (\ref{MainResultsTheoremRecursiveFormulas1}).  First, $(A_{0}-\lambda _{0}I)^{\ast }v_{m}=0$ and $(v_m,u_i)=\delta_{m,i}$ hence
\begin{eqnarray*}
0=\left( v_{m},(A_{0}-\lambda _{0}I)\beta_{m}\right) \overset{\eqref{SecondKeyStepThm3.1Proof}}{=}\left( v_{m},\tsum\limits_{i=0}^{m}p_{m,i}u_{i}-A_{1}u_{1}\right)
\overset{(\text{\ref{eq:B.1.iv}})}{=} \alpha_{1}^{m}-\left( v_{m},A_{1}u_{1}\right)
\end{eqnarray*}%
so that $\alpha_{1}^{m}=\left( v_{m},A_{1}u_{1}\right) $.  This and identity \eqref{ValOflambda1^m} imply that formula (\ref%
{MainResultsTheoremRecursiveFormulas1}) is true.

Finally, suppose that $s\geq 2$.  Then $(A_{0}-\lambda _{0}I)^{\ast }v_{m}=0$ and $(v_m,u_i)=\delta_{m,i}$ implies
\begin{eqnarray*}
0 &=&\left( v_{m},(A_{0}-\lambda _{0}I)\beta_{m+s-1}\right) \\
&\overset{\eqref{SecondKeyStepThm3.1Proof}}{=}&\left(
v_{m},\tsum\limits_{i=0}^{m}p_{m+s-1,i}u_{i}-\tsum\limits_{j=0}^{
s-1}\tsum\limits_{k=0}^{j}\tsum\limits_{l=1}^{\left\lfloor \frac{%
m+s-1 -j}{m}\right\rfloor }p_{j,k}\Lambda ^{k}A_{l}\beta_{
m+s-1-j-lm}\right) \\
&\overset{\eqref{eq:DoubleSumId3}}{=}&p_{m+s-1,m}-\tsum\limits_{k=0}^{s-1}\tsum\limits_{j=k}^{s-1}p_{j,k}\left(
\left( \Lambda ^{\ast }\right) ^{k}v_{m},\tsum\limits_{l=1}^{\left\lfloor 
\frac{m+s-1-j}{m}\right\rfloor }A_{l}\beta_{m+s-1-j-lm}\right)  \\
&\overset{(\text{\ref{eq:B.1.ii}})}{=} &r_{s-1}+m\alpha_{1}^{m-1}\alpha_{s}-\tsum\limits_{i=0}^{s-1}\tsum\limits_{j=i}^{s-1}p_{j,i}\left(
\left( \Lambda ^{\ast }\right) ^{i}v_{m},\tsum\limits_{k=1}^{\left\lfloor \frac{m+s-1-j}{m}\right\rfloor
}A_{k}\beta_{m+s-1-j-km}\right)
\end{eqnarray*}%
Therefore with this equality, the fact $\alpha_{1}\neq 0$, and Proposition \ref{Appendix A Proposition}.\ref{eq:A.1.v}, we can
solve for $\alpha_{s}$ and we will find that it is given by (\ref%
{MainResultsTheoremRecursiveFormulas2}). This completes the proof.
\qquad \endproof

\subsection{Proof of Corollaries \protect\ref{Calculation kth order by
matrices corollary} and \protect\ref{Second order coefficients corollary}}

Both corollaries follow almost trivially now.  To prove Corollary \ref{MurdockClarkMyResponseCorollary}, we just examine the
recursive formulas (\ref%
{MainResultsTheoremRecursiveFormulas1})-(\ref%
{MainResultsTheoremRecursiveFormulas3}) in Theorem \ref{Main Results Theorem} to see that $\alpha _{k}, \beta_{k} $ requires only $A_{0}$, \ldots , $A_{\left\lfloor \frac{m+k-1}{m}%
\right\rfloor }$.  To prove Corollary \ref{2ndOrderCoeffCorollary}, we use Proposition \ref{Appendix B Proposition} to show that $$p_{0,0}=1, p_{1,0}=p_{2,0}=0, p_{1,1}=\alpha_1, p_{2,1}=\alpha_2, p_{2,2}=\alpha_1^2$$ and then from this and (\ref%
{MainResultsTheoremRecursiveFormulas1})-(\ref%
{MainResultsTheoremRecursiveFormulas3}) we get the desired result for $\alpha_1, \alpha_2, \beta_0, \beta_1, \beta_2$ in terms of $A_0, A_1, A_2$.  The last part to prove is the formula for $\alpha_{2}$ in terms of $f\left( \varepsilon ,\lambda \right)$ and its partial derivatives.  But the formula follows from the series representation of $f\left( \varepsilon ,\lambda \right) $ in (%
\ref{SeriesRepresentationForf}) and $\lambda _{0}\left( \varepsilon \right) $ in
(\ref{MainResultsTheoremEigenvaluePuiseuxSeries}) since, for $\varepsilon$ in a neighborhood of the origin,
\begin{eqnarray*}
0 &=&f\left( \varepsilon ,\lambda _{0}\left( \varepsilon \right) \right)\\
 &=& \left( a_{10}+a_{0m}p_{m.m}\right) \varepsilon \\
&&+\left\{ 
\begin{array}{c}
\left( a_{01}p_{2,1}+a_{02}p_{2,2}+a_{11}p_{1,1}+a_{20}p_{00}\right)
\varepsilon ^{2}\text{, for }m=1 \\ 
\left( a_{0m}p_{m+1,m}+a_{0m+1}p_{m+1,m+1}+a_{11}p_{1,1}\right) \varepsilon
^{\frac{m+1}{m}}\text{, for }m>1%
\end{array}%
\right. \\
&&+O\left( \varepsilon ^{\frac{m+2}{m}}\right)
\end{eqnarray*}%
which together with Proposition \ref{Appendix B Proposition} implies the formula for $\alpha_{2}$.
\qquad \endproof

\appendix\section*{}%

The fundamental properties of the matrix $\Lambda$ defined in \eqref{PartialInverseOfTheJordanNormalFormOfANot} which are needed in this paper are given in the following proposition: 
\begin{proposition}
\label{Appendix A Proposition}

\begin{enumerate}[label=\textnormal{(\roman{*})}, ref=\roman{*}]
\item \label{eq:A.1.iii} We have $\Lambda \left( A_{0}-\lambda _{0}I\right) Ue_{1}=0$, $\Lambda
\left( A_{0}-\lambda _{0}I\right) Ue_{i}=Ue_{i}$, for $2\leq i\leq n$,

\item For $1\leq i\leq m-1$ we have 
\begin{eqnarray}
\Lambda u_{m}=0\text{, } \Lambda u_{i}=u_{i+1} \label{eq:A.1.iv}
\end{eqnarray}

\item \label{eq:A.1.v} $\Lambda ^{\ast }v_{1}=0$, and $\Lambda ^{\ast }v_{i}=v_{i-1}$, for $%
2\leq i\leq m$.
\end{enumerate}
\end{proposition}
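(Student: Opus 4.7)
The plan is to work entirely in the coordinate system provided by $U$, in which both $A_0 - \lambda_0 I$ and $\Lambda$ take a block-diagonal form, and then verify each of the three bullets by direct computation on basis vectors.

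First I would observe that from \eqref{TheJordanNormalFormOfANot} we get
\begin{equation*}
A_0 - \lambda_0 I = U\left[\begin{array}{c|c} J_m(0) & \\ \hline & W_0 - \lambda_0 I_{n-m}\end{array}\right] U^{-1},
\end{equation*}
so combined with \eqref{PartialInverseOfTheJordanNormalFormOfANot} the product $\Lambda(A_0-\lambda_0 I)$ is similar to $\mathrm{diag}(J_m(0)^{*}J_m(0),\, I_{n-m})$. Using the standard action $J_m(0)e_1 = 0$, $J_m(0)e_i = e_{i-1}$ for $2\le i\le m$, and $J_m(0)^{*}e_i = e_{i+1}$ for $1\le i\le m-1$, $J_m(0)^{*}e_m = 0$, the matrix $J_m(0)^{*}J_m(0)$ acts as $e_1 \mapsto 0$ and $e_i \mapsto e_i$ for $2\le i\le m$. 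This immediately yields \ref{eq:A.1.iii}: $\Lambda(A_0-\lambda_0 I)Ue_1 = 0$, and $\Lambda(A_0-\lambda_0 I)Ue_i = Ue_i$ for $2\le i\le n$ (the cases $i>m$ follow from the identity block).

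Next, for \eqref{eq:A.1.iv} I would expand $\Lambda u_i = \Lambda U e_i$. Since $e_i$ for $1\le i\le m$ lies in the upper block, applying \eqref{PartialInverseOfTheJordanNormalFormOfANot} gives $\Lambda u_i = U J_m(0)^{*} e_i$. The two cases $i=m$ and $1\le i\le m-1$ then follow from the formulas for $J_m(0)^{*}e_i$ above, yielding $\Lambda u_m = 0$ and $\Lambda u_i = Ue_{i+1} = u_{i+1}$.

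Finally, for \ref{eq:A.1.v} I take the conjugate transpose of \eqref{PartialInverseOfTheJordanNormalFormOfANot}, getting
\begin{equation*}
\Lambda^{*} = (U^{-1})^{*}\left[\begin{array}{c|c} J_m(0) & \\ \hline & ((W_0-\lambda_0 I_{n-m})^{-1})^{*}\end{array}\right] U^{*}.
\end{equation*}
Since $v_i = (U^{-1})^{*}e_i$, one has $U^{*}v_i = e_i$, and so for $1\le i\le m$, $\Lambda^{*}v_i = (U^{-1})^{*}J_m(0)e_i$. Applying $J_m(0)e_1 = 0$ and $J_m(0)e_i = e_{i-1}$ for $2\le i\le m$ gives $\Lambda^{*}v_1 = 0$ and $\Lambda^{*}v_i = v_{i-1}$ for $2\le i\le m$, as required.

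There is no real obstacle here; the content of the proposition is that $\Lambda$ was defined precisely to be a partial inverse of $A_0 - \lambda_0 I$ that realises the cyclic shift sending each Jordan-chain vector to the next one. The only mild care required is to keep straight that $J_m(0)$ shifts indices down (because $1$'s sit on the superdiagonal) while its adjoint $J_m(0)^{*}$ shifts indices up; this dictates which of the two formulas shows up in (ii) versus (iii).
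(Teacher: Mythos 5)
Your proposal is correct and follows essentially the same route as the paper: conjugate by $U$ to reduce everything to the block-diagonal forms, use $J_m(0)^{*}J_m(0)=\mathrm{diag}[0,I_{m-1}]$ for (i), and read off (ii) and (iii) from the action of $J_m(0)^{*}$ and $J_m(0)$ on the standard basis (the paper packages (ii) and (iii) together via the single identity $(U^{-1}\Lambda U)^{*}=U^{*}\Lambda^{*}(U^{-1})^{*}$, which is the same computation you perform). No gaps.
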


{\em Proof}.
i.\  Using the fact $J_{m}\left( 0\right)
^{\ast }J_{m}\left( 0\right) =\diag[0,I_{m-1}]$, \eqref{TheJordanNormalFormOfANot}, and \eqref{PartialInverseOfTheJordanNormalFormOfANot} we find by block multiplication that $U^{-1}\Lambda \left( A_{0}-\lambda _{0}I\right) U=\diag[0,I_{n-1}]$.  This implies the result.

ii.\ \& iii.\  The results follow from the definition of $u_i, v_i$ in \eqref{BiorthogonalVectors}, \eqref{BiorthogonalVectors_v} and the fact
\begin{eqnarray*}
(U^{-1}\Lambda U)^*=U^{\ast }\Lambda ^{\ast }\left( U^{-1}\right) ^{\ast }=\left[ 
\begin{array}{c|c}
J_{m}\left( 0\right) &  \\ \hline
& \left[ \left( W_{0}-\lambda _{0}I_{n-m}\right) ^{-1}\right] ^{\ast }%
\end{array}%
\right] \text{.} \qquad \endproof
\end{eqnarray*}

\section*{}%

This appendix contains two propositions. The first proposition gives fundamental identities that help to characterize the polynomials $\{p_{j,i}\}_{j=i}^\infty$ and $\left\{r_l\right\}_{l\in\mathbb{N}}$ in \eqref{pPolysValWRightIndZero} and \eqref{rPolysVal}.  The second proposition gives explicit recursive formulas to calculate these polynomials.

We may assume $\sum_{j=1}^{\infty }\alpha_{j}z^{j}$ is a convergent Taylor
series and $\alpha_1\not=0$.

\begin{proposition}
\label{Appendix B Proposition}The polynomials $\{p_{j,i}\}_{j=i}^\infty$ and $\left\{r_l\right\}_{l\in\mathbb{N}}$ have the
following properties:

\begin{enumerate}[label=\textnormal{(\roman{*})}, ref=\roman{*}]
\item \label{eq:B.1.i} $\sum\limits_{j=i}^{\infty }p_{j,i}z^{j}=\left(
\sum\limits_{j=1}^{\infty }\alpha_{j}z^{j}\right) ^{i}$, for $j\geq i\geq
0 $.

\item\label{PropB1ii} For $l\geq 1$ we have
\begin{eqnarray}
r_{l}=p_{m+l,m}-m\alpha_{1}^{m-1}\alpha_{l+1}. \label{eq:B.1.ii} 
\end{eqnarray}

\item \label{eq:B.1.iii} $p_{j,1}=\alpha_{j}$, for $j\geq 1$.

\item For $j\geq 0$ we have
\begin{eqnarray}
p_{j,j}=\alpha_{1}^{j}. \label{eq:B.1.iv}
\end{eqnarray}

\item \label{eq:B.1.v} $p_{j+1,j}=j\alpha_{1}^{j-1}\alpha_{2}$, for $j>0$.

\item
For $j\geq i>0$ we have
\begin{eqnarray}
\sum\limits_{q=1}^{j-i+1}\alpha_{q}p_{j-q,i-1}=p_{j,i}. \label{eq:B.1.vi}
\end{eqnarray}
\end{enumerate}
\end{proposition}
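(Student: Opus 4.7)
\textbf{Proof plan for Proposition \ref{Appendix B Proposition}.}

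The plan is to prove \ref{eq:B.1.i} first, since it is the underlying generating function identity from which the combinatorial statements (iii)--(vi) follow by reading off coefficients or by factoring the power series. For \ref{eq:B.1.i}, I would expand $\left(\sum_{j=1}^{\infty}\alpha_j z^j\right)^i$ by the distributive law as $\sum_{s_1,\ldots,s_i\geq 1}\alpha_{s_1}\cdots\alpha_{s_i}z^{s_1+\cdots+s_i}$ and collect the coefficient of $z^j$. The key observation is that in the constraint $s_1+\cdots+s_i=j$ with $s_\varrho\geq 1$, the upper bound $s_\varrho\leq j-i+1$ is automatic (the other $i-1$ indices contribute at least $i-1$), so the coefficient of $z^j$ matches the definition of $p_{j,i}$ in \eqref{pPolysValWRightIndZero}. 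For $i=0$ both sides equal $1$.

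With \ref{eq:B.1.i} established, the identities \ref{eq:B.1.iii}, \eqref{eq:B.1.iv}, and \ref{eq:B.1.v} are immediate. Indeed, \ref{eq:B.1.iii} reads off the coefficient of $z^j$ in $\sum \alpha_k z^k$. For \eqref{eq:B.1.iv}, the sole tuple with $s_1+\cdots+s_j=j$ and each $s_\varrho\geq 1$ is $(1,\ldots,1)$, so $p_{j,j}=\alpha_1^j$. For \ref{eq:B.1.v}, tuples with $s_1+\cdots+s_j=j+1$ and each $s_\varrho\geq 1$ must have exactly one entry equal to $2$ and the rest equal to $1$, giving $j$ tuples each contributing $\alpha_1^{j-1}\alpha_2$. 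The identity \eqref{eq:B.1.vi} follows from the factorization $\left(\sum_{k=1}^{\infty}\alpha_k z^k\right)^i=\left(\sum_{k=1}^{\infty}\alpha_k z^k\right)\!\left(\sum_{k=1}^{\infty}\alpha_k z^k\right)^{i-1}$; applying \ref{eq:B.1.i} to both sides and comparing the coefficient of $z^j$ yields $p_{j,i}=\sum_{q\geq 1,\,k\geq i-1,\,q+k=j}\alpha_q p_{k,i-1}$, and the range $1\leq q\leq j-i+1$ results from the constraint $k\geq i-1$.

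The main combinatorial work is \eqref{eq:B.1.ii}, and this is the step I expect to be most delicate. I would compare the defining sums for $p_{m+l,m}$ and $r_l$ directly. In $p_{m+l,m}$ the constraint is $s_1+\cdots+s_m=m+l$ with $1\leq s_\varrho\leq l+1$ (the upper bound being automatic); in $r_l$ the same constraint is imposed but with $s_\varrho\leq l$. Thus $p_{m+l,m}-r_l$ counts precisely the tuples in which at least one $s_\varrho$ equals $l+1$. But if $s_\varrho=l+1$ for some $\varrho$, then the remaining $m-1$ entries sum to $m-1$ with each $\geq 1$, forcing them all to equal $1$; in particular, at most one $s_\varrho$ can equal $l+1$. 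There are exactly $m$ such tuples, each contributing $\alpha_1^{m-1}\alpha_{l+1}$, so $p_{m+l,m}-r_l=m\alpha_1^{m-1}\alpha_{l+1}$, which rearranges to \eqref{eq:B.1.ii}. The boundary case $l=1$ should be checked separately: here the sum defining $r_l$ is empty (the constraint $s_\varrho\leq 1$ forces $s_1+\cdots+s_m=m\neq m+1$), giving $r_1=0$, consistent with \eqref{rPolysVal} and with $p_{m+1,m}=m\alpha_1^{m-1}\alpha_2$ from \ref{eq:B.1.v}.
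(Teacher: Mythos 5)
Your proposal is correct and follows essentially the same route as the paper: part (i) by expanding the $i$-th power and collecting coefficients, parts (iii)--(v) by direct inspection of the defining sums, part (vi) by factoring the generating function and comparing coefficients, and part (ii) by splitting the tuples according to whether some $s_{\varrho}$ equals $l+1$. Your separate check of the boundary case $l=1$ in (ii) is a minor refinement the paper leaves implicit, but the argument is the same.
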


{\em Proof}.
i.\  For $i\geq 0$, $\left(\sum\limits_{j=1}^{\infty }\alpha_{j}z^{j}\right) ^{i}=\sum\limits_{s_{1}=1}^{\infty }\cdots \sum\limits_{s_{i}=1}^{\infty }\left(
\prod\limits_{\varrho =1}^{i}\alpha_{s_{\varrho }}\right) z^{s_{1}+\cdots
+s_{i}}=\sum\limits_{j=i}^{\infty }p_{j,i}z^{j}\text{.}$

ii.\  Let $l\geq 1$.  Then by definition of $p_{m+l,m}$ we have 
\begin{eqnarray*}
p_{m+l,m} &=&\sum_{\substack{ s_{1}+\cdots +s_{m}=m+l  \\ 1\leq s_{\varrho }\leq l+1 \\ \exists \varrho \in \left\{ 1,...,m\right\} \text{ such that }s_{\varrho
}=l+1\text{ }}}\prod\limits_{\varrho =1}^{m}\alpha_{s_{\varrho }}+\sum 
_{\substack{ s_{1}+\cdots +s_{m}=m+l  \\ 1\leq s_{\varrho }\leq l+1  \\ \NEG%
{\exists}\varrho \in \left\{ 1,...,m\right\} \text{ such that }s_{\varrho
}=l+1\text{ }}}\prod\limits_{\varrho =1}^{m}\alpha_{s_{\varrho }} \\
&=&m\alpha_{1}^{m-1}\alpha_{l+1}+r_{l}\text{.}
\end{eqnarray*}

iii.\  For $j\geq 1$ we have $p_{j,1}=\dsum 
_{\substack{ s_{1}=j  \\ 1\leq s_{\varrho }\leq j}}\prod_{\varrho
=1}^{1}\alpha_{s_{\varrho }}=\alpha_{j}$.

iv.\  For $j\geq 0$, $p_{0,0}=1$ and $p_{j,j}=\dsum_{\substack{ s_{1}+\cdots +s_{j}=j 
\\ 1\leq s_{\varrho }\leq 1}}\prod_{\varrho =1}^{j}\alpha_{s_{\varrho
}}=\prod_{\varrho =1}^{j}\alpha_{1}=\alpha_{1}^{j}$.

v.\  For $j>0$, $
p_{j+1,j}=\sum_{\substack{ s_{1}+\cdots +s_{j}=j+1  \\ 1\leq s_{\varrho
}\leq 2}}\prod_{\varrho =1}^{j}\alpha_{s_{\varrho }}=\tsum_{\varrho
=1}^{j}\alpha_{1}^{j-1}\alpha_{2}=j\alpha_{1}^{j-1}\alpha_{2}\text{.}$

vi.\  It follows by $$\sum\limits_{j=i}^{\infty }p_{j,i}z^{j}\overset{\textrm{(i)}}{=}\sum\limits_{j=i-1}^{\infty }p_{j,i-1}z^{j}\sum\limits_{j=1}^{\infty
}p_{j,1}z^{j}=\sum\limits_{j=i}^{\infty }\left(
\sum\limits_{q=1}^{j-i+1}p_{j-q,i-1}p_{q,1}\right) z^{j}\text{.} \qquad \endproof$$

This next proposition gives explicit recursive formulas to calculate the polynomials $\{p_{j,i}\}_{j=i}^\infty$ and $\left\{r_l\right\}_{l\in\mathbb{N}}$.
\begin{proposition}\label{ExplicitRecursiveFormulasPolynomials}
For each $i\geq 0$, the sequence of polynomials, $\{p_{j,i}\}_{j=i}^\infty$, is given by the recursive formula
\begin{eqnarray}
\label{pPolysRecursFormula}
p_{i,i}=\alpha_1^i\text{, } p_{j,i}=\frac{1}{(j-i)\alpha_1}\sum_{k=i}^{j-1}[(j+1-k)i-k]\alpha_{j+1-k}p_{k,i}
\text{, for }j>i.
\end{eqnarray}
Furthermore, the polynomials $\left\{r_l\right\}_{l\in\mathbb{N}}$ are given by the recursive formula:
\begin{eqnarray}
r_1=0\text{, }r_l&=&\frac{1}{l\alpha_1}\sum_{j=1}^{l-1}[(l+1-j)m-(m+j)]\alpha_{l+1-j}r_j+ \label{rPolysRecursFormula} \\
&& \frac{m}{l}\alpha_1^{m-2}\sum_{j=1}^{l-1}[(l+1-j)m-(m+j)]\alpha_{l+1-j}\alpha_{j+1}
\text{, for }l>1. \notag
\end{eqnarray}
\end{proposition}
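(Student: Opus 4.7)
\textbf{Proof proposal for Proposition \ref{ExplicitRecursiveFormulasPolynomials}.}
The plan is to derive the recursion for $p_{j,i}$ from a standard ODE-type identity for powers of a power series, and then to bootstrap the recursion for $r_l$ via the identity \eqref{eq:B.1.ii} that relates $r_l$ to $p_{m+l,m}$.

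First I would set $f(z):=\sum_{j\geq 1}\alpha_j z^j$ and, for each fixed $i\geq 0$, introduce $g(z):=f(z)^i$. By Proposition \ref{Appendix B Proposition}.\ref{eq:B.1.i} we have $g(z)=\sum_{j\geq i}p_{j,i}z^j$, so $g$ is analytic near $0$ with $g(0)=0$ when $i\geq 1$, and the initial value $p_{i,i}=\alpha_1^i$ comes directly from \eqref{eq:B.1.iv}. Differentiating $g=f^i$ yields the key differential identity
\begin{equation*}
f(z)\,g'(z)=i\,f'(z)\,g(z).
\end{equation*}
Expanding both sides into power series and equating the coefficient of $z^n$ for any $n\geq i$ gives
\begin{equation*}
\sum_{k=i}^{n}k\,p_{k,i}\,\alpha_{n+1-k}=i\sum_{k=i}^{n}(n+1-k)\,\alpha_{n+1-k}\,p_{k,i},
\end{equation*}
which after rearrangement becomes
\begin{equation*}
\sum_{k=i}^{n}\bigl[(n+1-k)i-k\bigr]\alpha_{n+1-k}\,p_{k,i}=0.
\end{equation*}
Isolating the term $k=n$ (whose coefficient is $[i\cdot 1 - n]\alpha_1=-(n-i)\alpha_1$) and solving for $p_{n,i}$ gives the formula \eqref{pPolysRecursFormula}, valid since $\alpha_1\neq 0$. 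This step is essentially pure bookkeeping; the only care needed is to verify the index ranges of the convolutions (and that the term $k=n$ is picked up when multiplying by $z$ on the left via $g'$ but absent from the analogous sum on the right).

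For the $r_l$ recursion I would apply the recursion for $p_{j,i}$ just established, with $i=m$ and $j=m+l$, getting
\begin{equation*}
p_{m+l,m}=\frac{1}{l\alpha_1}\sum_{k=m}^{m+l-1}\bigl[(m+l+1-k)m-k\bigr]\alpha_{m+l+1-k}\,p_{k,m}.
\end{equation*}
Then I would re-index with $k=m+j$ (so $j$ runs from $0$ to $l-1$), split off the $j=0$ term, and use Proposition \ref{Appendix B Proposition}.\ref{PropB1ii} and \eqref{eq:B.1.iv} to substitute $p_{m,m}=\alpha_1^m$ and $p_{m+j,m}=r_j+m\alpha_1^{m-1}\alpha_{j+1}$ for $1\leq j\leq l-1$. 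The $j=0$ contribution simplifies to $m\alpha_1^{m-1}\alpha_{l+1}$, which cancels exactly the term subtracted on the left-hand side of \eqref{eq:B.1.ii} when we form $r_l=p_{m+l,m}-m\alpha_1^{m-1}\alpha_{l+1}$; the remaining sum splits cleanly into the two pieces appearing in \eqref{rPolysRecursFormula}. The initial value $r_1=0$ holds by the definition \eqref{rPolysVal}.

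The main obstacle, such as it is, is purely combinatorial/arithmetic: keeping the coefficient $(j+1-k)i-k$ (respectively $(l+1-j)m-(m+j)$) correctly normalized after all the re-indexing, and confirming that the $j=0$ term in the $r_l$ derivation annihilates exactly the $m\alpha_1^{m-1}\alpha_{l+1}$ summand. No analytic or structural difficulty is anticipated beyond the use of $\alpha_1\neq 0$ (which is granted in the hypotheses preceding the proposition) to justify division by $(j-i)\alpha_1$ and $l\alpha_1$.
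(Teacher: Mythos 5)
Your proposal is correct, and for the $r_l$ recursion it follows the paper's derivation essentially verbatim: apply the $p_{j,i}$ recursion with $i=m$, $j=m+l$, observe that the $k=m$ term produces exactly $m\alpha_1^{m-1}\alpha_{l+1}$ after division by $l\alpha_1$ (cancelling the subtracted term in \eqref{eq:B.1.ii}), re-index, and substitute $p_{m+j,m}=r_j+m\alpha_1^{m-1}\alpha_{j+1}$. The one genuine difference is in the first half: the paper disposes of \eqref{pPolysRecursFormula} by citing Gould's coefficient identities for powers of Taylor series, whereas you derive it from scratch via the differential identity $f g' = i f' g$ for $g=f^i$ and coefficient comparison. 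That makes your argument self-contained (and it correctly recovers the degenerate case $i=0$ and the need for $\alpha_1\neq 0$ and $j>i$ when isolating the $k=j$ term), at the cost of redoing a classical computation the paper outsources. One tiny inaccuracy in your prose: the parenthetical claim that the $k=n$ term is ``absent from the analogous sum on the right'' is false — it appears in both convolutions (with $q=1$ in each), and it is precisely the \emph{combined} coefficient $(n+1-n)i-n=-(n-i)$ times $\alpha_1$ that you solve against; your displayed equations already reflect this correctly, so nothing breaks.
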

\begin{proof}
We begin by showing \eqref{pPolysRecursFormula} is true. For $i=0$, \eqref{pPolysRecursFormula} follows from the definition of the $p_{j,0}$. If $i>0$ then by \eqref{eq:B.1.iv} and \cite[(1.1) \& (3.2)]{Gould1974} it follows that \eqref{pPolysRecursFormula} is true.  Lets now prove \eqref{rPolysRecursFormula}. From \eqref{eq:B.1.ii} and \eqref{pPolysRecursFormula}, we have
\begin{eqnarray*}
r_l&=&p_{m+l,m}-m\alpha_1^{m-1}\alpha_{l+1}\\
&=&\frac{1}{l\alpha_1}\sum_{k=m}^{m+l-1}[(m+l+1-k)m-k]\alpha_{m+l+1-k}p_{k,m}-m\alpha_1^{m-1}\alpha_{l+1}\\
&\overset{(\text{\ref{eq:B.1.iv}})}{=}&\frac{1}{l\alpha_1}\sum_{k=m+1}^{m+l-1}[(m+l+1-k)m-k]\alpha_{m+l+1-k}p_{k,m}\\
&=&\frac{1}{l\alpha_1}\sum_{j=1}^{l-1}[(l+1-j)m-(m+j)]\alpha_{l+1-j}p_{m+j,m}\\
&\overset{(\text{\ref{eq:B.1.ii}})}{=}&\frac{1}{l\alpha_1}\sum_{j=1}^{l-1}[(l+1-j)m-(m+j)]\alpha_{l+1-j}\left(r_j+m\alpha_1^{m-1}\alpha_{j+1}\right)\\
&=&\frac{1}{l\alpha_1}\sum_{j=1}^{l-1}[(l+1-j)m-(m+j)]\alpha_{l+1-j}r_j+ \\
&& \frac{m}{l}\alpha_1^{m-2}\sum_{j=1}^{l-1}[(l+1-j)m-(m+j)]\alpha_{l+1-j}\alpha_{j+1}\text{,}
\end{eqnarray*}
for $l > 1$. This completes the proof.
\qquad \end{proof}
\section*{}
These double sum identities are used in the proof of Theorem 3.1
\begin{eqnarray}
\label{eq:DoubleSumId1} \sum_{x=c}^d \sum_{y=0}^{d-x}a_{x,y}=\sum_{y=0}^{d-c} \sum_{x=c}^{d-y}a_{x,y}\text{,} \\
\label{eq:DoubleSumId2} \sum_{x=0}^{d-1} \sum_{y=x+1}^{d}a_{x,y}= \sum_{y=1}^d \sum_{x=0}^{y-1}a_{x,y}\text{,}\\
\label{eq:DoubleSumId3} \sum_{x=0}^d \sum_{y=0}^{x}a_{x,y}=\sum_{y=0}^d \sum_{x=y}^{d}a_{x,y}\text{,}\\
\label{eq:DoubleSumId4} \sum_{y=c}^{d-1} \sum_{x=1}^{d-y}a_{x,y}=\sum_{q=c+1}^{d} \sum_{x=1}^{q-c}a_{x,q-x}\text{.}
\end{eqnarray}

\thanks{\textbf{Acknowledgments.}}  I would like to thank Prof. Alexander Figotin for bringing the subject of this paper to my attention and for all the helpful suggestions and encouragement given in the various stages of writing this paper.  I am also indebted to the anonymous referees for their valuable comments on my original manuscript.%


\end{document}